\newcommand{\K}{{\mathbb K}}
\newcommand{\R}{{\mathbb R}}
\newcommand{\SR}{{\mathcal S}}
\newcommand{\C}{{\mathbb C}}
\newcommand{\N}{{\mathbb N}}
\newcommand{\id}{\mathrm{id}}
\newcommand{\V}{{\mathcal V}}
\newcommand{\W}{{\mathcal W}}
\newcommand{\I}{{\mathbf I}}
\newcommand*\conj[1]{\overline{#1}}
\newcommand{\trace}{\mathrm{tr}}
\DeclareMathOperator{\rank}{rank}
\renewcommand{\subset}{\subseteq}
\algnewcommand\NOT{\textbf{not\ }}
\algnewcommand\GOTO{\textbf{go\ to\ }}
\newtheorem{thm}{Theorem}
\newtheorem{prop}[thm]{Proposition}
\newtheorem{cor}[thm]{Corollary}
\newtheorem{lem}[thm]{Lemma}
\newtheorem*{prob}{Problem}
\newtheorem*{ques}{Question}
\theoremstyle{definition}
\newtheorem{defn}{Definition}[section]
\newtheorem{exmp}{Example}[section]
\theoremstyle{remark}
\newtheorem*{rem}{Remark}
\let\oldtabular\tabular
\renewcommand{\tabular}{\footnotesize\oldtabular}
\title[Sampling varieties for SOS programs]{Sampling algebraic varieties for \\Sum of Squares programs}
\date{\today}
\author{Diego Cifuentes} 
\address{
Laboratory for Information and Decision Systems (LIDS), 
Massachusetts Institute of Technology, Cambridge MA 02139, USA}
\email{diegcif@mit.edu}
\author{Pablo A. Parrilo}
\address{
Laboratory for Information and Decision Systems (LIDS), 
Massachusetts Institute of Technology, Cambridge MA 02139, USA}
\email{parrilo@mit.edu}
\thanks {\emph {2010 Mathematics Subject Classification: Primary: 90C22. Secondary: 65H10. }}
\keywords {Sum of squares, Sampling, SDP, Algebraic variety, Numerical Algebraic Geometry}
\begin{document}
\maketitle

\begin{abstract}
We study sum of squares (SOS) relaxations to optimize polynomial functions over a set $\V\cap \R^n$, where $\V$ is a complex algebraic variety.
We propose a new methodology that, rather than relying on some algebraic description, represents $\V$ with a generic set of complex samples.
This approach depends only on the geometry of $\V$, avoiding representation issues such as multiplicity and choice of generators. 
It also takes advantage of the coordinate ring structure to reduce the size of the corresponding semidefinite program (SDP).
In addition, the input can be given as a straight-line program.
Our methods are particularly appealing for varieties that are easy to sample from but for which the defining equations are complicated, such as $SO(n)$, Grassmannians or rank~$k$ tensors.
For arbitrary varieties we can obtain the required samples by using the tools of numerical algebraic geometry.
In this way we connect the areas of SOS optimization and numerical algebraic geometry.
\end{abstract}

\pagestyle{myheadings}
\thispagestyle{plain}
\markboth{D. Cifuentes and P.~A. Parrilo}{Sampling varieties for SOS programs}

\section{Introduction}

Consider the ring $\R[x]:=\R[x_1,\ldots,x_n]$ of multivariate polynomials and an algebraic variety~$\V\subset\C^n$.
For a given polynomial $p\in \R[x]$, we are interested in deciding whether
\begin{align}\label{eq:positive_variety}
  p(x)\geq 0 \mbox{ for all } x\in \V\cap\R^n.
\end{align}
More generally, we can consider the problem of finding lower bounds for a polynomial on a real variety.
Recall that an algebraic variety can be given implicitly, as the zero set of polynomial equations, or parametrically, as the image of $\C^n$ under a rational map.

The decision problem in~\eqref{eq:positive_variety} is computationally hard, but there are simpler relaxations based on \emph{sum of squares} (SOS) \cite{Parrilo2003}.
Recall that a polynomial $F\in \R[x]$ is SOS if it can be written in the form $F(x)=\sum_i f_i(x)^2$ for some $f_i\in \R[x]$.
Given a bound $d\in \N$, a sufficient condition for~\eqref{eq:positive_variety} to hold is the existence of a polynomial $F\in\R[x]$ such that
\begin{align}\label{eq:coordinatering}
  p(z)=F(z) \mbox{ for all } z\in\V 
  \;\;\; ( \mbox{i.e., } p \equiv F \bmod \I(\V)); \qquad
  F(x) \mbox{ is SOS}; \qquad
  \deg(F)\leq 2d.
\end{align}
We refer to such an $F$ as a \emph{$d$-SOS($\V$) certificate}.
The main problem we address in this paper is the following.

\begin{prob}
Given a bound $d\in \N$, a polynomial $p(x)$ and a variety $\V$, find a $d$-SOS($\V$) certificate (if it exists).
\end{prob}

It was shown in~\cite{Parrilo2003b} that, given a Gr\"obner basis of the ideal $\I(\V)$, the above problem reduces to a semidefinite program (SDP).
To the best of our knowledge, this is the only known method to address it.
This approach is quite effective for varieties with simple Gr\"obner bases, such as the hypercube $\{0,1\}^n$, or hypersurfaces.
Unfortunately, besides these simple cases, Gr\"obner bases computation is typically too expensive.

Given defining equations of the variety $\{h_j(x)=0\}_j$, there is a \emph{weaker} class of certificates based on writing $p(x)$ in the form $F(x) + \sum_j g_j(x)h_j(x)$ \cite{Parrilo2003}.
This approach is widely used in practice~\cite{Blekherman2013,Lasserre2009}, thanks to the convenience of allowing any set of defining equations.
But this simplicity comes with a price, since the success of the relaxation now depends on the choice of a good set of equations $\{h_j\}_j$.
Furthermore, the corresponding SDP is larger.
Indeed, for fixed~$\V$ the number of unknowns is $O(d^{\,2n})$, whereas for~\eqref{eq:coordinatering} is $O(d^{\,2\dim\V})$; see Remark~\ref{thm:hilbertfunction}.
We also point out that for several parametric varieties, notably secant varieties~\cite{Landsberg:2007aa}, the defining equations are not explicitly known, thus making this type certificates unfeasible.

\subsubsection*{Sampling certificates}
In this paper we propose an alternative geometric approach to compute SOS($\V$) certificates.
Rather than depending on an algebraic description of the variety, we rely on a generic set of samples $Z:=\{z_1,\ldots,z_S\}\subset \V$.
By specializing the condition in~\eqref{eq:coordinatering} to such samples, we get the following.

\begin{defn}\label{defn:certificate}
  Let $\V\subset \C^n$ be a variety and let $p\in \R[x]$ be nonnegative on $\V\cap\R^n$.
  Given a bound $d\in \N$, a \emph{sampling $d$-SOS pre-certificate} is a pair $(F,Z)$, where  $F(x)$ is a polynomial and $Z=\{z_1,\ldots,z_S\}\subset \V$ a sample set, such that
\begin{align}\label{eq:sdpsamplesvariety}
  p(z_s) &= F(z_s)\;\mbox{ for }s=1,\ldots,S; \qquad
  F(x) \mbox{ is SOS}; \qquad
  \deg(F)\leq 2d; 
\end{align}
  The pre-certificate is \emph{correct} if $F$ is a $d$-SOS($\V$) certificate, i.e., it satisfies ~\eqref{eq:coordinatering}.

\end{defn}

Computing a sampling pre-certificate reduces to an SDP. 
We show that suitable genericity assumptions guarantee its correctness, thus giving us an SOS($\V$) certificate.
An interesting feature of our sampling methodology is that the only information needed of the variety is a \emph{sampling oracle}, i.e., a procedure that generates generic samples.
Note that sampling points is very simple when the variety has a known parametrization (e.g., for $SO(n)$, Grassmannians, rank~$k$ tensors).
For a general variety~$\V$, the field of \emph{numerical algebraic geometry} provides practical methods to sample generic points~\cite{Sommese2005}.

\subsubsection*{Contributions}
This paper presents the following contributions.
\begin{itemize}
\item We introduce a new methodology to compute SOS certificates over an algebraic variety~$\V$.
This is a geometric formulation that represents $\V$ with a generic set of complex samples, instead of relying on some algebraic description.
In this way, we avoid algebraic issues such as multiplicity and the dependence on the specific generators used.
We analyze the correctness of our formulation, establishing sufficient conditions on the samples and the variety.
\item Our methodology takes advantage of the coordinate ring structure to simplify the SDP.
Moreover, it is the first such relaxation independent of Gr\"obner bases.
This makes our methods appealing for many varieties that are easy to sample from but for which Gr\"obner bases computation is intractable.
Examples of such varieties include $SO(n)$, Stiefel manifolds, Grassmannians and secant varieties.
\item We apply for the first time techniques from numerical algebraic geometry to SOS programs.
  In this way, we inherit some of the main strengths from this area.
  We highlight that these methods are trivially parallelizable, since they rely on homotopy continuation of many independent paths. 
  They also allow us to work with straight-line programs (i.e., polynomials do not need to be expanded).
\end{itemize}

\subsection*{Related work}

Polynomial optimization problems have attracted major research in past years.
SDP relaxations based on the SOS/moments method of~\cite{Parrilo2003,Lasserre2001} constitute the major trend of study.
The SOS literature is vast, we refer to~\cite{Lasserre2009,Blekherman2013,laurent2009sums} for an overview.

Our sampling SOS methodology extends the ideas from L\"ofberg and Parrilo in~\cite{Lofberg2004}, where they first consider sampling formulations for unconstrained SOS problems.
They show that sampling formulations offer some numerical advantages over the standard approach.
Most remarkably, the SDP has a low rank structure, which leads to a significant complexity improvement in interior point methods.
In particular, low rank structure is exploited in the solvers SDPT3 and DSDP \cite{sdpt3,dsdp}.
Secondly, the SDP is usually better conditioned, as it relies on a set of orthogonal polynomials instead of a monomial basis.
These properties make sampling formulations appealing, as seen in \cite{Roh2006, Roh2007, Liu2007}.
We will see that these properties are preserved in the variety case considered in this paper.
We remark that our use of the samples differs from \cite{Lofberg2004} in that for us samples carry additional information about the underlying variety~$\V$.

Different methods have been proposed to reduce complexity in SOS programs, in particular by exploiting symmetries, sparsity, and quotient ring structure; see \cite[\S3.3]{Blekherman2013},\cite[\S8]{laurent2009sums} and the references therein.
This paper is only concerned with the last item, but we point out that all these techniques can be combined together.
The Gr\"obner bases method to compute quotient ring SOS certificates was introduced in~\cite{Parrilo2003b}; some further improvements were made in~\cite{Permenter:2012aa}.
This is the default method for several varieties with simple Gr\"obner bases, particularly from combinatorial optimization~\cite[\S8]{laurent2009sums}.
Quotient ring methods have also been used for unconstrained optimization~\cite{Nie2006}.
We point out that there was no ``direct'' method (without computing the radical $\sqrt{I}$) to obtain SOS certificates on the coordinate ring (i.e., SOS($\V$) certificates).

Although the existence (or degree bounds) of SOS certificates is beyond the scope of this paper (see e.g., \cite{Scheiderer2009}), we review some known results for completeness.
In particular, SOS($\V$) certificates exist if:
(i) $\V$ is zero-dimensional,
(ii) $\V$ is one-dimensional and $p$ is both strictly positive and bounded~\cite{Schweighofer2006},
(iii) $\V$ is compact and $p$ is strictly positive~\cite{Schmudgen1991},
(iv) $\V$ is a variety of minimal degree and $p$ is quadratic~\cite{Blekherman2016}.
For most varieties there exist nonnegative polynomials which are not SOS.
Nonetheless, such instances can always be approximated by SOS polynomials (possibly of higher degree) \cite[\S2.6]{Lasserre2009}.

\subsection*{Solution outline}
Our approach to compute $d$-SOS($\V$) certificates follows three main steps.
\begin{enumerate}[(i)]
  \item\label{item:sampling} \textit{Sampling:} 
    Obtain a ``good'' set of samples $Z$ on the variety.
    It will be sufficient for us to consider generic (random) samples on each component of the variety.
  \item\label{item:sdp} \textit{SDP:}
    Given a sample set $Z$, find a sampling pre-certificate $(F,Z)$ using an SDP.
  \item\label{item:verification} \textit{Verification:} 
    Check that the pre-certificate $(F,Z)$ is correct.
    This reduces to the identity testing problem.
\end{enumerate}

The structure of this paper is as follows.
Section~\ref{s:preliminaries} presents some basic algebraic preliminaries.
Afterwards, we approach each of the problems from above, although in a different order to simplify the exposition.
Section~\ref{s:sampling} deals with~\ref{item:sdp}, Section~\ref{s:verify} with~\ref{item:verification}, and Section~\ref{s:irreducible} with~\ref{item:sampling}.
Section~\ref{s:mainprocedure} presents the complete sampling SOS methodology.
Finally, Section~\ref{s:optimization} shows several examples to illustrate our methods.

\section{Preliminaries}\label{s:preliminaries}

\subsection{Algebraic geometry}

Let $\K$ denote a field which is either $\R$ or $\C$, and let $\K[x]=\K[x_1,\ldots,x_n]$ denote the ring of polynomials with coefficients in $\K$.
The \emph{ideal} generated by a set of polynomials $h=\{h_1,\ldots,h_m\}\subset \K[x]$ is
\begin{align*}
I = \langle h\rangle :=\{\textstyle\sum_{i} g_ih_i:g_i\in \K[x]\}.
\end{align*}
The \emph{quotient ring} $\K[x]/I$ is the set of equivalence classes where $f\sim_I g$ if $f-g\in I$.

Given a set of polynomials $h\subset \K[x]$, its complex \emph{algebraic variety} is
\begin{align*}
\V = \V_\C(h) := \{x\in \C^n:h_i(x)=0 \mbox{ for }h_i\in h\}.
\end{align*}
The corresponding real variety is $\V\cap \R^n$.
Note that $\V_\C(h)=\V_\C(\langle h\rangle)$.
In this paper we only consider complex varieties defined by real polynomials.
It is easy to see that a complex variety $\V$ can be defined by real polynomials if and only if it is \emph{self-conjugate}, i.e. its complex conjugate $\conj{\V}$ is itself.

The \emph{coordinate ring} of a variety $\V\subset \C^n$ is the quotient ring
$\K[\V] := \K[x]/\I_\K(\V)$,
where $\I_\K(\V)$ is the vanishing ideal
\begin{align*}
  \I_\K(\V) := \{f\in \K[x]: f(x)=0 \mbox{ for all }x\in \V\}.
\end{align*}
Equivalently, $\K[\V]$ is the set of equivalence classes of polynomials where $f\sim_\V g$ if they define the same function on $\V$.
\begin{rem}
  Hilbert's Nullstellensatz implies that $\I_\K(\V_\C(I))=\sqrt{I}$ for any ideal $I\subset\K[x]$.
  It follows that $\K[\V]$ is equal to the quotient ring $\K[x]/I$ only if $I$ is radical.
\end{rem}

We say that a variety $\V\subset \C^n$ is \emph{irreducible} if it is not the union of two proper varieties.
Note that any variety parametrized by $\C^n$ is irreducible.
An arbitrary variety can be decomposed in a unique way in the form
\begin{align*}
  \V = \V_1 \cup \cdots \cup \V_r,
  \qquad
  \mbox{ where } V_i\not\subseteq V_j \mbox{ for } i\neq j.
\end{align*}
The varieties $\V_i$ are called the \emph{irreducible components} of $\V$.
If $\V$ is self-conjugate, then either $\V_i$ is also self-conjugate, or there is a pair $(\V_i,\V_j)$ of conjugate components.

\subsection{Sampling varieties}\label{s:preliminariessampling}

Our technique requires a sampling oracle for the complex variety $\V$.
More precisely, we need to sample generic (random) points in each irreducible component of $\V$.
Observe that sampling points is easy whenever we have a parametrization.
For instance, we can sample points from $SO(n)$ using the \emph{Cayley parametrization}:
\begin{align}\label{eq:cayley}
  A\mapsto (\id_n -A)(\id_n +A)^{-1}, \quad \mbox{ for skew symmetric } A.
\end{align}
Other parametric varieties include Grassmannians, Stiefel manifolds, secant varieties.

For a general variety~$\V$, a practical way to compute sample points is through the tools of numerical algebraic geometry; 
we refer to \cite{Sommese2005,Bates2013} for an introduction.
Homotopy continuation tools such as Bertini~\cite{bertini} and PHCpack~\cite{phcpack} allow to compute the irreducible decomposition of $\V$, and afterwards to sample an arbitrary number of points in any component.
Typically the most expensive part is to produce the decomposition; sampling points is relatively cheap.
These numerical methods offer the following advantages with respect to symbolic methods such as Gr\"obner bases:
they are trivially parallelizable (each path can be tracked independently),
allow for straight-line programs (polynomials do not need to be to be expanded),
and offer better numerical stability.

\begin{rem}[Complex samples]
  Even though we are only interested in real polynomials, our methods allow the sample points to be complex.
  This is an important feature, since computing real points on a variety is significantly harder than computing complex points.
\end{rem}

\begin{rem}[Zero-dimensional case]
  The results from this paper are most useful for positive-dimensional varieties, particularly if the number of components is relatively small.
  The reason is that we treat each irreducible component separately.
  In particular, we take care of the zero-dimensional part of the variety exhaustively, i.e., we check for all such points that $p(x)$ is indeed nonnegative.
  If the whole variety is zero-dimensional our algorithm reduces to a brute-force search.
\end{rem}

\subsection{SOS certificates on varieties}

Consider a variety $\V$ defined by equations $h = \{h_j\}_j$, and let $I = \langle h\rangle$ be the generated ideal.
There are two traditional SOS methods to certify nonnegativity on $\V\cap\R^n$.
An \emph{equations $d$-SOS} certificate is a tuple of polynomials $(F,g_1,\ldots,g_m)$ such that
\begin{equation}\label{eq:sdprelaxation}
  \begin{aligned}
  &p(x) = F(x) + \sum_j g_j(x)h_j(x); \qquad
  F(x) \mbox{ is SOS}; \qquad
  \deg(F), \deg(g_jh_j)\leq 2d.
  \end{aligned}
\end{equation}
Finding such a certificate reduces to an SDP~\cite{Parrilo2003}.
A \emph{quotient ring $d$-SOS} certificate is a polynomial $F$ such that
\begin{align}\label{eq:quotientring}
  p - F \in I
  \;\;\; ( \mbox{i.e., } p \equiv F \bmod I); \qquad
  F(x) \mbox{ is SOS}; \qquad
  \deg(F)\leq 2d.
\end{align}
Given a Gr\"obner basis of $I$, the above reduces to an SDP~\cite[\S3.3.5]{Blekherman2013}.
For an introduction to Gr\"obner bases and quotient ring computations we refer to~\cite{clo}.

Quotient ring formulations are appealing for two main reasons. 
Firstly, they are \emph{stronger} than equations SOS (i.e., if \eqref{eq:sdprelaxation} is feasible then so is \eqref{eq:quotientring}, but the converse is not true).
And secondly, the size of the associated SDP is \emph{smaller}, not only because of the absence of the equations $g_j$, but since it also takes into account the structure of the quotient ring.
Consequently, quotient ring SOS has become the default approach for varieties with simple Gr\"obner bases (e.g., the hypercube $\{0,1\}^n$).
However, the expense of Gr\"obner bases computation limits its application to further cases.

Our sampling SOS methodology can be seen as a ``better'' quotient ring formulation.
The reason being that we work modulo the \emph{radical} ideal $\sqrt{I}=\I(\V)$, and thus the underlying space is the coordinate ring.
The following diagram illustrates the relations among these three types of certificates.

\begin{center}
\begin{tikzpicture}
  \def\x{3.2}
  \def\y{.5}
  \tikzstyle{mybox} = [scale=0.9];
  \tikzstyle{labelbox} = [scale=0.8,font=\bfseries];
  \tikzstyle{smallbox} = [scale=0.8,font=\itshape];
  \tikzstyle{edgelabel} = [scale=0.6,font=\bfseries];
  \tikzstyle{myarrow} = [scale=0.8,blue,very thick,->];
  \node[labelbox] (relax) at (-.9*\x,1*\y) {relaxation};
  \node[labelbox] (desc) at (-.9*\x,0*\y) {description of $\V$};
  \node[mybox] (eqSOS) at (0*\x,1*\y) {\eqref{eq:sdprelaxation} equations SOS};
  \node[smallbox] (desc0) at (0*\x,0*\y) {$\{h_j(x)=0\}_j$};
  \node[mybox] (quotSOS) at (1*\x,1*\y) {\eqref{eq:quotientring} quot.\ ring SOS};
  \node[smallbox] (desc1) at (1*\x,0*\y) {Gr\"obner basis};
  \node[mybox] (coorSOS) at (2*\x,1*\y) {\eqref{eq:sdpsamplesvariety} sampling SOS};
  \node[smallbox] (desc2) at (2*\x,0*\y) {samples $\{z_s\}_s$};
  \draw[myarrow] (.4*\x,2.2*\y) to node[above,edgelabel] {stronger and smaller SDP} (2.1*\x,2.2*\y);
\end{tikzpicture}
\end{center}

\begin{rem}[Hilbert function]\label{thm:hilbertfunction}
  For $k\in \N$, let $\mathcal{L}_k\subset \R[\V]$ be the linear space spanned by the polynomials of degree at most~$k$.
  The function $H_\V(k):= \dim(\mathcal{L}_k)$ is known as the \emph{Hilbert function}, and it plays an important role in sampling SOS (also in quotient ring SOS).
  Indeed, the size of the PSD matrix in the SDP is precisely $H_\V(d)$.
  We will also see in Section~\ref{s:irreducible} that the number of samples we require is given by $H_\V(2d)$.
  The Hilbert function can be bounded as follows~\cite{Chardin1989}:
  \begin{align}\label{eq:hilbertfunction}
    H_\V(k)\leq {n+ k\,\choose k},\quad \mbox{ and } \quad
    H_\V(k)\leq \deg\V \,{\dim\V+ k\,\choose k} \mbox{ if }\V\mbox{ is equidimensional},
  \end{align}
  where $\deg,\dim$ denote the degree and dimension.
  The second bound implies that, for fixed~$\V$, the size of the PSD matrix in sampling SOS is $O(d^{\dim\V})$.
  In contrast, for equations SOS we get~$O(d^n)$.
\end{rem}

\section{Computing pre-certificates}\label{s:sampling}

In this section we show how, given a candidate sample set~$Z$, computing sampling SOS pre-certificates reduces to an SDP.
We will also study what condition do we need on the sample set in order for such pre-certificate to be correct.
The answer will be given by the concept of \emph{poisedness} from polynomial interpolation.
Finally, we will show how to reduce the size of the SDP in order to take advantage of the coordinate ring structure.

\subsection{Sampling SDP}\label{s:computing}

For a degree bound $d$, let $u(x)\in \R[x]^N$ denote the vector with all $N={n+d\choose d}$ monomials of degree at most~$d$.
Recall that a polynomial $F\in \R[x]$ is $d$-SOS if and only if
\begin{align*}
  F(x) = Q\bullet u(x)u(x)^T
\end{align*}
for some positive semidefinite matrix $Q$ (denoted $Q\succeq 0$), where the notation $\bullet$ is for trace inner product~\cite{Parrilo2003}.
Computing a polynomial $F$ satisfying~\eqref{eq:sdpsamplesvariety} reduces to the following SDP:
\begin{equation}\label{eq:SDPpaper}
  \boxed{
\begin{aligned}
  &\mbox{find } &&Q\in \SR^{N},\quad Q\succeq 0 \\
  &\mbox{subject to} &&p(z_s) = Q\bullet u(z_s)u(z_s)^T,&\qquad\qquad \mbox{ for }s=1,\ldots,S
\end{aligned}
}
\end{equation}
where $\SR^N$ denotes the space of $N\times N$ real symmetric matrices.
Note that the matrix $Q$ is real, whereas $p(z_s)$ and $u(z_s)$ are complex.
Thus, each equality imposes a constraint on both the real and the imaginary part, i.e., 
\begin{align*}
&\Re(p(z_s)) = Q\bullet \Re(u(z_s)u(z_s)^T), & \Im(p(z_s)) = Q\bullet \Im(u(z_s)u(z_s)^T ).
\end{align*}

The above SDP has two important features:
the polynomial $p$ can be given as a \emph{straight-line program} (i.e., it does not need to be expanded) and
the constraint matrices have \emph{low rank}. 
Indeed, the rank of the constraint matrices $\Re(u(z_s)u(z_s)^T)$ and $\Im(u(z_s)u(z_s)^T)$ is at most two.
This special rank structure can be exploited in interior point methods, as discussed in~\cite{dsdp,Lofberg2004,Roh2006}.
In particular, the Hessian assembly takes only $O(N^3)$ operations for low rank matrices, as opposed to $O(N^4)$ for unstructured matrices.

Observe that the monomial vector $u(x)$ can be replaced by any other polynomial set with the same linear span.
In particular, we will see in Section~\ref{s:orthogonal} that $u(x)$ can be chosen to be an orthogonal basis with respect to a natural inner product supported on the samples.
Remarkably, this orthogonalization reduces complexity in the SDP by exploiting the algebraic \emph{dependencies} of the coordinate ring $\R[\V]$.
In addition, the conditioning of the problem might improve, as explained in~\cite{Lofberg2004} for the unconstrained case $\V=\C^n$.

\begin{rem}[Kernel/Image form]
  The feasible set of~\eqref{eq:SDPpaper} has the form
    $Q\succeq 0, Q\in \mathcal{Q}$, 
  where 
  \begin{align*}
    \mathcal{Q}=\{Q\in\SR^N:Q\bullet A_i=b_i\}
  \end{align*}
  is an affine subspace.
  We refer to the above representation of $\mathcal{Q}$ as the \emph{kernel form}.
  Alternatively, we can describe $\mathcal{Q}$ explicitly by giving some generators, i.e.,
  \begin{align*}
    \mathcal{Q} = \{Q_0+\sum_j\lambda_jQ_j: \lambda_j\in \R\} 
  \end{align*}
  where $Q_0\bullet A_i = b_i$ and $Q_j\bullet A_i = 0$.
  We refer to this representation as the \emph{image form}.
  Depending on the problem, either of them might be more convenient.
  In particular, if the number of constraints is close to the dimension of $\mathcal{S}^N$ then the latter representation is more compact.
  This will be the case in the applications shown in Sections~\ref{s:procrustes} and~\ref{s:traceratio}.
  For a given problem, we can decide which representation is better by estimating the number of variables used in both of them, as discussed in~\cite{Parrilo2003}.
\end{rem}

\subsection{Poisedness implies correctness}\label{s:samplequotientring}

We just showed how to compute a sampling SOS pre-certificate for a given sample set.
However, this pre-certificate might be incorrect unless we are cautious with the sample set, as illustrated in the next example.
\begin{exmp}[Incorrect pre-certificate]\label{exmp:incorrect}
  Let $\V\subset\C^2$ be the zero set of $h(x):=x_2^2-1$,
  that consists of two complex lines: $\C\times \{1\}$ and $\C\times \{-1\}$.
  Let $p(x):=x_1^2-x_2+1$, which is nonnegative on $\V\cap\R^2$.
  Let $Z:=\{(k,1)\}_{k=1}^S\subset \V$ be a set of samples and let $F(x):=x_1^2$.
  Observe that $(F,Z)$ is a sampling SOS pre-certificate, but it is not correct because $p(0,-1)\neq F(0,-1)$.
  This example illustrates that a sample set, regardless of its size, might lead to incorrect pre-certificates if it does not capture well the geometry of the variety 
  (in this case $Z$ misses one of the components of~$\V$).
\end{exmp}

We now present a condition that guarantees the correctness of a pre-certificate.
Let $\mathcal{R}=\R[\V]$ be the coordinate ring of the variety, which is the space where we will work on.
In particular, we will see the entries of the polynomial vector $u(x)$, as well as $p(x)$, as elements of $\mathcal{R}$.
We need the following definition.

\begin{defn}\label{defn:poised}
  Let $\V\subset \C^n$ be a self-conjugate variety and let $\mathcal{R} = \R[\V]$.
  Let $\mathcal{L}\subset \mathcal{R}$ be a linear subspace and let $Z\subset \V$ be a set of samples.
  We say that $(\mathcal{L},Z)$ is \emph{poised}
  \footnote{In polynomial interpolation it is usually further required that $|Z|=D$, where $D$ is the dimension of $\mathcal{L}$ \cite{Sauer2006}.
We do not impose such condition.} 
  if the only polynomial $q\in \mathcal{L}$ such that $q(z)=0$ for all $z\in Z$ is the zero polynomial.
\end{defn}
\begin{rem}
  For any finite dimensional $\mathcal{L}$ there is a finite set $Z$ such that $(\mathcal{L},Z)$ is poised.
\end{rem}

Let $\mathcal{L}_{d}\subset \mathcal{R}$ be the linear space spanned by the entries of $u(x)$, and let $\mathcal{L}_{2d}\subset \mathcal{R}$ be spanned by the entries of $u(x)u(x)^T$.
Note that $F(x)=Q\bullet u(x)u(x)^T\in \mathcal{L}_{2d}$.
The following proposition tells us that poisedness guarantees the correctness of a sampling SOS pre-certificate.
Thus, a \emph{good set of samples} is one such that $(\mathcal{L}_{2d},Z)$ \emph{is poised}.

\begin{prop}
  Let $\V\subset \C^n$ be a self-conjugate variety, let $\mathcal{R}=\R[\V]$ and let $p\in \mathcal{R}$ be nonnegative on $\V\cap \R^n$.
  Let $(F,Z)$ be a sampling SOS pre-certificate and let $\mathcal{L}_{2d}\subset \mathcal{R}$ be a linear subspace such that $p,F\in \mathcal{L}_{2d}$.
  If $(\mathcal{L}_{2d},Z)$ is poised then $(F,Z)$ is correct.
\end{prop}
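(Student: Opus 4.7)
The plan is to show that the residue $p - F$ represents the zero element of the coordinate ring $\mathcal{R} = \R[\V]$, from which the remaining conditions in \eqref{eq:coordinatering} follow immediately from those already present in Definition \ref{defn:certificate}.

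First I would set $q := p - F$ and observe that by hypothesis $p, F \in \mathcal{L}_{2d}$, so $q \in \mathcal{L}_{2d}$ as well (since $\mathcal{L}_{2d}$ is a linear subspace of $\mathcal{R}$). Next, the pre-certificate condition \eqref{eq:sdpsamplesvariety} gives $p(z_s) = F(z_s)$ for every sample $z_s \in Z$, hence $q(z_s) = 0$ for all $s = 1, \ldots, S$. Thus $q$ is an element of $\mathcal{L}_{2d}$ that vanishes identically on $Z$.

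At this point the poisedness hypothesis on $(\mathcal{L}_{2d}, Z)$ applies directly: by Definition \ref{defn:poised}, the only $q \in \mathcal{L}_{2d}$ vanishing on every point of $Z$ is the zero element of $\mathcal{R}$. Therefore $p - F = 0$ in $\mathcal{R}$, which is exactly the statement $p \equiv F \bmod \I(\V)$. Combined with the other two conditions of the pre-certificate (namely that $F$ is SOS and $\deg(F) \leq 2d$), this establishes that $F$ satisfies \eqref{eq:coordinatering}, and hence $(F, Z)$ is correct.

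There is no real obstacle here; the substance of the argument has been packaged into Definition \ref{defn:poised}, so the proposition is essentially an unwinding of definitions. The only minor subtlety to flag is the distinction between polynomials in $\R[x]$ and their images in $\mathcal{R}$: one should be careful that $\mathcal{L}_{2d}$ is viewed as a subspace of the coordinate ring, so that ``$p - F = 0$ in $\mathcal{L}_{2d}$'' correctly translates to the congruence $p \equiv F \bmod \I(\V)$ on the polynomial side, rather than the (stronger and false) equality $p = F$ in $\R[x]$.
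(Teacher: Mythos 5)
Your proof is correct and follows essentially the same route as the paper's: set the difference $q = p - F$, observe it lies in $\mathcal{L}_{2d}$ and vanishes on $Z$, and invoke poisedness to conclude $q = 0$ in $\mathcal{R}$. The closing remark distinguishing equality in $\mathcal{R}$ from equality in $\R[x]$ is a sound clarification but does not change the argument.
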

\begin{proof}
  Let $g:= p-F \in \mathcal{L}_{2d}$, and observe that $g(z)=0$ for $z\in Z$.
  As $(\mathcal{L}_{2d},Z)$ is poised, this implies that $g=0$ and thus $p=F\in\mathcal{R}$.
\end{proof}

For the rest of this section we assume that the poisedness condition from above is satisfied.
In Section~\ref{s:irreducible} we will discuss how to choose the samples in order to satisfy this requirement.

\subsection{Reducing complexity}\label{s:orthogonal}
The size of the PSD matrix $Q$ from~\eqref{eq:SDPpaper} is ${n+d \choose d}$.
We can reduce the size of this matrix by taking advantage of the coordinate ring structure.
The size of the new matrix will be given by the Hilbert function $H_\V(d)$; see Remark~\ref{thm:hilbertfunction}.
To do so, we simply need to find a basis of the linear subspace $\mathcal{L}_d\subset \mathcal{R}$ spanned by the entries of $u(x)$.
We now explain how to get an orthogonal basis $u^o(x)$ with respect to the inner product given in the next proposition.

\begin{prop}
  Let $\V\subset \C^n$ be a self-conjugate variety and let $\mathcal{R} = \R[\V]$.
  Let $\mathcal{L}_{d}\subset \mathcal{R}$ be a linear subspace and let $Z\subset \V$ be a set of samples.
  Let $\langle \cdot,\cdot\rangle_Z: \mathcal{L}_{d}\times \mathcal{L}_{d}\to \R$ be
\begin{align*}
  \langle f,g \rangle_Z = \sum_{z\in Z} (f(z)g(\conj{z}) + f(\conj{z})g(z)).
\end{align*}
If $(\mathcal{L}_{d},Z)$ is poised then $(\mathcal{L}_{d},\langle \cdot,\cdot\rangle_Z)$ is a real inner product space.
\end{prop}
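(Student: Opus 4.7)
The plan is to verify in turn the four defining properties of a real inner product: well-definedness (including real-valuedness), symmetry, $\R$-bilinearity, and positive definiteness. The first three are essentially formal, so the real content lies in positive definiteness, where the poisedness hypothesis is used.

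First I would check that $\langle f,g\rangle_Z$ is well-defined and real-valued. Each class in $\mathcal{R}=\R[\V]$ admits a representative $f\in \R[x]$, so for any $z\in \C^n$ the identity $f(\conj z)=\conj{f(z)}$ holds. Since $\V$ is self-conjugate, $z\in \V$ implies $\conj z\in \V$, so the evaluations $f(\conj z)$, $g(\conj z)$ make sense on equivalence classes in $\R[\V]$ (the value depends only on the class, since $\I_\R(\V)$ is a real ideal). The summand $f(z)g(\conj z)+f(\conj z)g(z)$ then equals $f(z)\conj{g(z)}+\conj{f(z)\conj{g(z)}}=2\Re\bigl(f(z)\conj{g(z)}\bigr)\in \R$, so $\langle\cdot,\cdot\rangle_Z$ takes values in $\R$.

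Symmetry $\langle f,g\rangle_Z=\langle g,f\rangle_Z$ is immediate from the formula, and $\R$-bilinearity follows by linearity of evaluation in each argument, together with the fact that the real scalars we use commute with complex conjugation.

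The only substantive step is positive definiteness. Specializing to $g=f$ and using $f(\conj z)=\conj{f(z)}$, I would write
\begin{align*}
\langle f,f\rangle_Z \;=\; \sum_{z\in Z}\bigl(f(z)\conj{f(z)}+\conj{f(z)}f(z)\bigr) \;=\; 2\sum_{z\in Z}\lvert f(z)\rvert^2 \;\geq\; 0.
\end{align*}
Equality forces $f(z)=0$ for every $z\in Z$. Since $f\in \mathcal{L}_d$ and $(\mathcal{L}_d,Z)$ is poised, Definition~\ref{defn:poised} yields $f=0$ in $\mathcal{R}$. This completes the verification.

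The main (mild) obstacle is being careful about the distinction between representatives in $\R[x]$ and classes in $\R[\V]$, so that the conjugation identity $f(\conj z)=\conj{f(z)}$ and the well-definedness on equivalence classes are both justified before the computation. Once that bookkeeping is in place, positive definiteness is the only place where poisedness enters, and it enters exactly where Definition~\ref{defn:poised} is designed to be used.
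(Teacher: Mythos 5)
Your proof is correct and follows essentially the same route as the paper: dismiss symmetry and bilinearity as formal, then use $\langle f,f\rangle_Z = 2\sum_{z\in Z}|f(z)|^2 \geq 0$ together with poisedness to get positive definiteness. The only difference is that you spell out the well-definedness and real-valuedness checks (via $f(\conj z)=\conj{f(z)}$ and self-conjugacy of $\V$) that the paper's proof leaves implicit; this is a harmless and welcome elaboration, not a different argument.
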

\begin{proof}
  It is clear that $\langle \cdot, \cdot \rangle_Z$ is bilinear and symmetric.
  Thus, we only need to check positiveness.
  Observe that 
    $\langle f,f \rangle_Z = \sum_{z\in Z} 2|f(z)|^2\geq 0$
, which is zero only if $f(z)=0$ for all $z\in Z$.
As $f\in \mathcal{L}_{d}$, the poisedness condition implies $f=0$.
\end{proof}
\begin{rem}
  Note that if $(\mathcal{L}_{2d},Z)$ is poised then $(\mathcal{L}_{d},Z)$ is also poised.
\end{rem}

To find an orthogonal basis, we will operate on the evaluation matrix $U$ with columns $u(z)$ for $z\in Z$.
Consider the real matrix $W:=[\Re(U)|\Im(U)]$.
Observe that $u(x)$ is an orthogonal basis with respect to $\langle \cdot,\cdot \rangle_Z$ if and only if the rows of $W$ are orthogonal with respect to the standard real inner product.
Thus, we just need to orthogonalize the rows of $W$.
Using an SVD (or rank revealing QR), we can obtain a decomposition $W = TW^o$, where $W^o$ has orthogonal rows and $T$ is a real full rank transformation matrix. 
Let $U^o$ be such that $W^o=[\Re(U^o)|\Im(U^o)]$.
The matrix $U^o$ encodes the new vector of orthogonal polynomials $u^o(x)$.
We note that directly orthogonalizing the matrix $U$ does not work, as the transformation matrix $T$ would be complex.

\begin{algorithm}
  \caption{Orthogonal basis on the coordinate ring}
  \label{alg:orthBasis}
  \begin{algorithmic}[1]
    \Require{Polynomial vector $u(x)$, samples $Z$ of variety $\V$}
    \Ensure{
      Orthogonal basis $u^o(x)$ and its evaluation matrix $U^o$
    }
    \Procedure{OrthBasis}{$u(x),Z$}
    \State $U:=$ evaluation matrix with columns $u(z)$ for $z\in Z$
    \State $W := [\,\Re(U)\,|\,\Im(U)\,]$
    \State orthogonalize $W =: TW^o$, where $W^o(W^o)^T=\id$
    \State let $U^o$ be such that $W^o = [\,\Re(U^o)\,|\,\Im(U^o)\,]$
    \State let $u^o(x)$ be such that $u(x)=Tu^o(x)$
    \State \Return $u^o(x)$, $U^o$
    \EndProcedure
  \end{algorithmic}
\end{algorithm}

\begin{exmp}\label{exmp:SO2_samples}
  Let $\V$ be the complex variety of the set of rotation matrices $SO(2)$, i.e.,
  \begin{align}\label{eq:SO2}
    \V = \{X\in \C^{2\times 2}: X^TX=\id_2, \; \det(X)=1\}.
  \end{align}
  Let $p(X) = 4X_{21}-2X_{11}X_{22}-2X_{12}X_{21}+3$, which is nonnegative on $\V\cap\R^{2\times 2}$.
  We want to find a sampling SOS certificate.
  We can sample points on $\V$ using the Cayley parametrization~\eqref{eq:cayley}.  
  Consider the following 3 complex samples:
  \begin{align*}
z_{1}=\bigl[\begin{smallmatrix}-0.6+0.8i &1.2+0.4i \\ -1.2-0.4i  &-0.6+0.8i \end{smallmatrix}\bigr],\quad
z_{2}=\bigl[\begin{smallmatrix}-1.2+0.4i &0.6+0.8i \\ -0.6-0.8i &-1.2+0.4i \end{smallmatrix}\bigr],\quad
z_{3}=\bigl[\begin{smallmatrix}-0.75+0.25i &0.75+0.25i \\ -0.75-0.25i  &-0.75+0.25i \end{smallmatrix}\bigr].
  \end{align*}

  We fix the degree bound $d=1$, and let $u(x)=(1,X_{11},X_{12},X_{21},X_{22})$ be the monomials of degree at most $d$.
  The matrix of evaluations is:
  \begin{align*}
    U = \left[\begin{smallmatrix}
   1           & 1           & 1           \\
  -0.6 + 0.8i  &-1.2 + 0.4i  &-0.75 + 0.25i\\
  -1.2 - 0.4i  &-0.6 - 0.8i  &-0.75 - 0.25i\\
   1.2 + 0.4i  & 0.6 + 0.8i  & 0.75 + 0.25i\\
  -0.6 + 0.8i  &-1.2 + 0.4i  &-0.75 + 0.25i
\end{smallmatrix}\right]
  \end{align*}
  Using an SVD we obtain the orthogonalized matrix $U^o$ and the corresponding polynomial basis $u^o(x)$.
  Note that $u^o(x)$ has only $3$ elements, as opposed to $u(x)$.

  \begin{minipage}{.5\linewidth}
  \vspace{-18pt}
  \begin{align*}
    U^o = \left[\begin{smallmatrix}
  -0.5955 + 0.1005i  &-0.5955 - 0.1005i  &-0.5201\\
   0.3058 + 0.6116i  &-0.3058 + 0.6116i  &0.2548i\\
  -0.0708 + 0.6411i  &-0.0708 - 0.6411i  &0.4100
\end{smallmatrix}\right]
  \end{align*}
  \end{minipage}
  \begin{minipage}{.5\linewidth}
    {\footnotesize
  \begin{align*}
    u^o(X) = (&X_{21} + X_{22} - .8054, \; X_{21} - X_{22} ,\\
    &X_{21} + X_{22} + 2.4831)
  \end{align*}
  }
  \end{minipage}%

  \vspace{-8pt}
  The sampling SDP is
\begin{eqnarray*}
  \mbox{find }\quad &Q\in \SR^{3},\quad Q\succeq 0 \\
  \mbox{subject to}\quad &p(z_s) = Q\bullet u_su_s^T,&\qquad\qquad \mbox{ for }s=1,2,3
\end{eqnarray*}
  where $u_s$ denotes the $s$-th column of $U^o$.
  Solving the SDP we obtain the sampling pre-certificate $(F,Z)$, where $F(X) = (2X_{21}+1)^2$.
\end{exmp}

\section{Verifying sampling pre-certificates}\label{s:verify}

We now address the problem of testing the correctness of a pre-certificate $(F,Z)$.
This problem is equivalent to determining whether the polynomial $f:=p-F$ is identically zero on the variety~$\V$, and it is known as the \emph{identity testing} problem (see e.g.,~\cite{Saxena2009} and the references therein).
Note that the problem is nontrivial even if $\V=\C^n$, since $f$ can be given as a straight-line program (such as a determinant).
Nonetheless, there is a nice randomized algorithm to solve it, provided that we can efficiently sample the variety.
Recall that generic samples can be obtained as explained in Section~\ref{s:preliminariessampling}.
We now proceed to review the notion of genericity, as well as showing the solution to the identity testing problem.

\subsection{Genericity}\label{s:genericity}

The notion of \emph{genericity} is fundamental in algebraic geometry.
Let $\V\subset\C^n$ be an irreducible variety of positive dimension.
We say that a property hods \emph{generically} on~$\V$ if there is a nonzero polynomial $q\in \C[\V]$ such that the property holds for any $z\in\V$ such that $q(z)\neq 0$.
Informally, this means that the property holds outside of the small bad region given by $q(z)=0$.
In Section~\ref{s:genericity2} we will use a variation of this notion of genericity that is better suited for dealing with real polynomials.

\begin{exmp}
  Let $\V=\C^{m\times m}$ be the space of $m\times m$ matrices.
  The property of being ``nonsingular'' is satisfied generically on~$\V$, since a matrix $A\in\V$ is singular only if $\det(A)=0$.
\end{exmp}

We often say that a sample point $z\in \V$ is \emph{generic} if some property of interest (such as the conclusion of a theorem) is satisfied generically on~$\V$.
For instance, we may say ``a generic $m\times m$ matrix is nonsingular''.
A generic point can be understood as a random point on the variety.

\begin{prop}\label{thm:identitytesting}
  Let $\V$ be an irreducible variety, let $f\in\C[\V]$ be a nonzero polynomial and let $z\in \V$ be a generic sample.
  Then $f(z)$ is nonzero.
\end{prop}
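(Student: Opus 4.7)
The proposition is almost tautological once unwound against the definition of genericity given in Section~\ref{s:genericity}, so the plan is essentially just to make that unwinding explicit. Recall that to say a property $P$ holds generically on $\V$ means: there exists a nonzero polynomial $q\in\C[\V]$ such that $q(z)\neq 0$ implies $P(z)$. The property of interest here is $P(z) := \,$``$f(z)\neq 0$''.

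The key move is simply to take $q := f$ itself as the witness polynomial. By hypothesis $f\in\C[\V]$ is nonzero, so $q$ is a legitimate witness. And tautologically, whenever $q(z)=f(z)\neq 0$ we have $f(z)\neq 0$, i.e.\ $P(z)$ holds. Hence $P$ is satisfied generically on $\V$, which by the convention recalled before the statement is exactly what we mean by ``$f(z)$ is nonzero for a generic sample $z\in\V$''.

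The only point that is not purely formal is to check that such generic samples actually exist, i.e.\ that the Zariski open set $\{z\in\V:f(z)\neq 0\}$ is nonempty. This is where the irreducibility of $\V$ (or really just the definition of $\C[\V]=\C[x]/\I_\C(\V)$) is used: saying $f$ is nonzero in $\C[\V]$ means $f\notin\I_\C(\V)$, so by definition of the vanishing ideal there is some $z\in\V$ with $f(z)\neq 0$; irreducibility further ensures that this open set is dense in~$\V$, so the notion of a generic sample is nontrivial.

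Since the argument has no real obstacle, I would present it in two lines: take $q=f$ as the defining polynomial of the ``bad'' locus, note $q$ is nonzero in $\C[\V]$ by hypothesis, and conclude immediately from the definition of genericity. The only thing worth flagging to the reader is that this is precisely the Schwartz--Zippel-type statement that underlies the randomized identity testing procedure alluded to in the paragraph preceding the proposition, so that the reader sees why such a trivial-looking statement is being isolated.
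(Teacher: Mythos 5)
Your proof is correct and takes essentially the same route as the paper's one-line proof: the paper likewise observes that the conclusion holds outside the bad region $\{f(z)=0\}$, which is exactly your choice of witness $q=f$. Your extra remarks on nonemptiness and density are fine elaboration but not a different argument.
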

\begin{proof}
  The conclusion holds except in the bad region defined by $f(z)=0$.
\end{proof}

\subsection{Identity testing}

Genericity allows us to derive \emph{randomized} algorithms that succeed with \emph{probability one} with respect to any distribution on $\V$ with full support.
In particular, Proposition~\ref{thm:identitytesting} gives rise to Algorithm~\ref{alg:identitytesting}.
This method efficiently solves the identity testing problem for an irreducible variety (provided that it can be sampled).
Surprisingly, no efficient deterministic algorithm to this problem is known, and it is likely that finding such algorithm is very hard~\cite{Kabanets2004}.

\begin{algorithm}
  \caption{Identity testing over $\C$}
  \label{alg:identitytesting}
  \begin{algorithmic}[1]
    \Require{Polynomial $f\in\C[x]$, irreducible variety $\V$}
    \Ensure{``True'', if $f$ is identically zero on $\V$. 
      ``False'', otherwise.}
    \Procedure{IsZero}{$f,\V$}
    \State $z:=$ generic sample from $\V$
    \State \Return True \algorithmicif\ $f(z)=0$ \algorithmicelse\ False
    \EndProcedure
  \end{algorithmic}
\end{algorithm}

\begin{rem}[Reducible varieties]
  If the variety $\V$ is reducible, we can still solve the identity testing problem provided that we can sample each of its irreducible components.
  We simply need to apply Algorithm~\ref{alg:identitytesting} to each component.
\end{rem}

\begin{rem}[Probability one]
  Randomized algorithms derived from genericity statements provably work with probability one in exact arithmetic.
  However, in floating point arithmetic there is a nonzero probability of error, thus leading to Monte Carlo algorithms; for further discussion see~\cite[\S 4]{Sommese2005}.
\end{rem}

\section{Selecting the samples}\label{s:irreducible}

The missing step to complete our sampling SOS methodology is to describe how to obtain a good set of samples~$Z$.
Recall from Section~\ref{s:samplequotientring} that a good sample set must be such that $(\mathcal{L}_{2d},Z)$ is poised.
Thus the question we address here is the following: 
given a linear space $\mathcal{L}_{2d}\subset \mathcal{R}=\R[\V]$, how can we get a sample set $Z$ such that $(\mathcal{L}_{2d},Z)$ is poised.
We will see in this section that this condition can be satisfied with a generic set of samples.

\subsection{Poisedness again}\label{s:poised}

Before proceeding to the selection of the samples, we first present an alternative characterization of poisedness.
Let $\mathcal{L}\subset\mathcal{R}$ be a finite dimensional subspace.
Let $v(x)\in \mathcal{R}^N$ be a polynomial vector whose entries span $\mathcal{L}$.
Let $U$ be the matrix with columns $v(z)$ for $z\in Z$ and let $\hat{U}:=[U|\conj{U}]$.
We will refer to the (complex) rank of matrix $\hat{U}$ as the \emph{empirical dimension} of $\mathcal{L}$ with respect to $Z$.
It is easy to see that it does not depend on the choice of generators.

\begin{lem}\label{thm:empdim}
  $(\mathcal{L},Z)$ is poised if and only if the dimension of $\mathcal{L}$ is equal to its empirical dimension.
\end{lem}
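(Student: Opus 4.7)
The plan is to translate poisedness into a statement about the left null space of the evaluation matrix, and then to identify the empirical dimension with the $\R$-rank of an explicit real matrix. First I would reduce to the case in which $v(x)$ is a basis of $\mathcal{L}$: picking any basis $b(x)$ of $\mathcal{L}$ and writing $v=Mb$ with $M\in\R^{N\times D}$ of full column rank ($D:=\dim\mathcal{L}$), the evaluation matrix factors as $\hat{U}=M\,[B\,|\,\conj{B}]$, where $B$ is the evaluation matrix of $b$ on $Z$. Since $M$ has full column rank, $\rank_{\C}\hat{U}=\rank_{\C}[B\,|\,\conj{B}]$, so the empirical dimension is independent of the choice of spanning set and I may assume $N=D$.

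Next I would rewrite the empirical dimension as a real rank. Over $\C$ the columns of $[U\,|\,\conj{U}]$ and of $[\Re U\,|\,\Im U]$ span the same subspace, since each is a $\C$-linear combination of the other via $\Re U=(U+\conj{U})/2$ and $\Im U=(U-\conj{U})/(2i)$. Because $[\Re U\,|\,\Im U]$ is a real matrix, its $\C$-rank coincides with its $\R$-rank, and hence the empirical dimension of $\mathcal{L}$ equals $\rank_{\R}[\Re U\,|\,\Im U]$.

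With $v$ now a basis, every $q\in\mathcal{L}\subset\R[\V]$ admits a unique real expansion $q=c^{T} v$ with $c\in\R^{D}$. The condition that $q$ vanishes on $Z$ is $c^{T}U=0$, which, since $c$ is real, is equivalent to $c^{T}[\Re U\,|\,\Im U]=0$. The space of such $c$ has real dimension $D-\rank_{\R}[\Re U\,|\,\Im U]=\dim\mathcal{L}-\mathrm{empdim}(\mathcal{L})$, so $(\mathcal{L},Z)$ is poised precisely when this dimension is zero, i.e.\ when $\dim\mathcal{L}=\mathrm{empdim}(\mathcal{L})$.

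The only delicate point is bookkeeping between real and complex linear algebra: $\mathcal{L}$ is a real vector space while $U$ is complex, so one must avoid conflating $\rank_{\C}$ and $\rank_{\R}$. Once the problem is rephrased in terms of $[\Re U\,|\,\Im U]$, the equivalence reduces to an elementary dimension count.
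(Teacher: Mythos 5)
Your proof is correct and follows essentially the same approach as the paper: both reduce to a basis of $\mathcal{L}$, both translate poisedness into a statement about real vectors in the left kernel of the complex evaluation matrix, and both exploit the equivalence between $\rank_{\C}[U\,|\,\conj{U}]$ and $\rank_{\R}[\Re U\,|\,\Im U]$. The paper phrases the argument as two implications (full row rank $\Rightarrow$ poised; rank-deficient $\Rightarrow$ a real null vector $\mu$ exists, extracted from the real or imaginary part of a complex null vector $\lambda$), whereas you streamline it into a single dimension count and also spell out the reduction to a basis that the paper merely asserts — but the core mechanism is the same.
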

\begin{proof}
  Let $D:=\dim(\mathcal{L})$ and assume that $v(x)\in \mathcal{R}^D$ is a basis.
  Assume first that the $\rank(\hat{U})=D$.
  Note that any $q\in \mathcal{L}$ can be written uniquely in the form $q(x)=\mu^Tu(x)$ for some $\mu\in \R^D$.
  The condition that $q(z)=0$ for $z\in Z\cup \conj{Z}$ implies that $\mu^T\hat{U}=0$.
  As $\hat{U}$ has full row rank then $\mu=0$, and thus $(\mathcal{L},Z)$ is poised.
  Assume now that $\hat{U}$ does not have full row rank.
  Then there is some nonzero $\lambda\in\C^D$ such that $\lambda^T\hat{U}=0$.
  Observe that this implies that $\Re(\lambda)^TU=\Im(\lambda)^TU=0$, where $\Re,\Im$ denote the real/imaginary part.
  Thus, there is a nonzero $\mu\in \R^D$ such that $\mu^TU=0$.
  Considering the polynomial $q(x):=\mu^Tu(x)$, we conclude that $(\mathcal{L},Z)$ is not poised.
\end{proof}
\begin{rem}\label{thm:poisedsamples}
  Since matrix $\hat{U}$ has $2|Z|$ columns, it follows from the lemma that if $(\mathcal{L},Z)$ is poised then $2|Z|\geq \dim \mathcal{L}$.
\end{rem}

\begin{exmp}
  Let $\V = \C$ and $\mathcal{R} = \R[x]$ be the space of univariate polynomials.
  Let $\mathcal{L}$ be the set of polynomials of degree less than $N$ and let $v(x)=(1,x,\ldots,x^{N-1})$.
  Let $Z\in \C^{N/2}$ be a tuple of complex samples, and let $\hat{Z}$ be the concatenation of $Z$ and $\conj{Z}$.
  The evaluation matrix $\hat{U}$ in this case is the Vandermonde matrix of $\hat{Z}$, which is singular only if there are repeated elements in $\hat{Z}$.
  Therefore, $(\mathcal{L},Z)$ is poised if and only if the elements of $\hat{Z}$ are all distinct.
\end{exmp}

\subsection{How many samples}\label{s:genericity2}

We return to the problem of finding a sample set $Z$ such that $(\mathcal{L}_{2d},Z)$ is poised.
As we decided that the samples will be random, the only missing point is to determine how many samples to take.
Remark~\ref{thm:poisedsamples} tells us that we need at least $\lceil{D/2}\rceil$ samples, where $D:=\dim(\mathcal{L}_{2d})$.
We wonder if this condition is \emph{generically sufficient} to guarantee poisedness.
\begin{ques}
  Let $\V$ be a self-conjugate variety.
  Let $\mathcal{L}_{2d}\subset \R[\V]$ be a $D$-dimensional linear subspace and let $Z\subset \V$ be a generic set of samples with $|Z|\geq D/2$.
  Is $(\mathcal{L}_{2d},Z)$ poised?
\end{ques}

In order to make sense of the above question, we have to be more precise about the meaning of a generic set of samples.
In Section~\ref{s:genericity} we saw the definition of a generic sample of an irreducible variety.
We have to extend this definition to multiple samples, taken possibly from a reducible variety.
Below we formalize the exact notion of genericity we use.
It is slightly different from the one in Section~\ref{s:genericity} as it includes the complex conjugates of the samples.
The reason for including the conjugates is that it reflects the fact that we are working with real polynomials.

\begin{defn}
  Let $\W\subset \C^n$ be an irreducible variety and let $Z=(z_1,\ldots,z_S)$ be a tuple of $S$~samples in~$W$.
  We say that $Z$ satisfies a property \emph{c-generically} (conjugate generically)
  if there is a polynomial $q\in\C[z_1,\ldots,z_S,\conj{z_1},\ldots,\conj{z_S}]$ such that: 
  \begin{itemize}
    \item $q(z_1,\ldots,z_S,\conj{z_1},\ldots,\conj{z_S})$ is not identically zero when $z_1,\ldots,z_S\in \W$.
    \item the property holds whenever $q(z_1,\ldots,z_S,\conj{z_1},\ldots,\conj{z_S})\neq 0$.
  \end{itemize}
  Let $\W_1,\ldots,\W_r$ be irreducible varieties and let $Z_1\subset \W_1,\ldots,Z_r\subset\W_r$ be tuples of samples.
  We say that $(Z_1,\ldots,Z_r)$ satisfies a property c-generically if there are polynomials $q_1\in\C[Z_1,\conj{Z_1}],\ldots,q_r\in\C[Z_r,\conj{Z_r}]$ such that: 
  \begin{itemize}
    \item $q_i(Z_i,\conj{Z_i})$ is not identically zero on $\W_i$, for $1\leq i\leq r$.
    \item the property holds whenever $q_1(Z_1,\conj{Z_1})\neq 0,\ldots,q_r(Z_r,\conj{Z_r})\neq 0$.
  \end{itemize}
  We say that $Z$ (resp. $Z_1,\ldots,Z_r$) is a \emph{c-generic} set of samples if it satisfies some property of interest c-generically.
\end{defn}

In the next section we will show that for an irreducible variety (or a conjugate pair of irreducible varieties) the answer to the question from above is positive.
However, for reducible varieties, we need to make sure that we have enough samples in each irreducible component, as will be discussed in Section~\ref{s:reducible}.
Example~\ref{exmp:incorrect} illustrates what might go wrong if we do not have enough samples in some component.

\subsection{The irreducible case}

Assume now that $\V = \W\cup \conj{\W}$, where $\W\subset \C^n$ is an irreducible variety.
This means that either $\V$ is a self-conjugate irreducible variety, or it is a conjugate pair of irreducible varieties.
In the latter case, note that we can assume without loss of generality that $Z\subset \W$, by possibly exchanging some samples with their complex conjugates.
We show now that if the samples $Z$ are c-generic and are at least as many as the dimensionality of the problem, then the poisedness property is satisfied.

\begin{thm}\label{thm:irreduciblepositive}
  Let $\W\subset \C^n$ be an irreducible variety, let $\V = \W\cup \conj{\W}$ and let $\mathcal{R}=\R[\V]$.
  Let $\mathcal{L}_{2d}\subset \mathcal{R}$ be a linear subspace and let $Z\subset \W$ be a c-generic set of samples.
  If $|Z|\geq D/2$, where $D:=\dim(\mathcal{L}_{2d})$, then $(\mathcal{L}_{2d},Z)$ is poised
  \footnote{This theorem is a special instance of the dimensionality problem in polynomial interpolation, and more elaborate versions can be found in the literature~\cite{Ciliberto:2001aa}.}.
\end{thm}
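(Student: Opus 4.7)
The plan is to translate poisedness into a rank condition on $\hat{U}$ via Lemma~\ref{thm:empdim}, and then to produce a single witness tuple $(z_1^*,\ldots,z_S^*)\in \W^S$ that achieves the maximal rank $D$. Once one such tuple exists, the c-generic conclusion is automatic: the condition $\rank_\C \hat{U}=D$ is the nonvanishing of some $D\times D$ minor of $\hat{U}$, and any such minor is a polynomial in $z_i,\conj{z_i}$, so a minor that does not vanish at the witness serves as the required polynomial $q\in \C[z_1,\ldots,z_S,\conj{z_1},\ldots,\conj{z_S}]$ in the definition of c-genericity.

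To construct the witness I would proceed inductively, maintaining the real subspace
\[
K_i:=\{f\in\mathcal{L}_{2d}\,:\,f(z_j)=0\text{ for all }j\le i\}.
\]
Each constraint $f(z_i)=0$ is one complex equation, hence two real conditions on $f\in \R[\V]$, so $\dim_\R K_i\ge \dim_\R K_{i-1}-2$. The goal at each step is to pick $z_i\in\W$ so that $\dim_\R K_i=\max(\dim_\R K_{i-1}-2,0)$; after $S\ge D/2$ such steps one reaches $\dim_\R K_S=0$, which by Lemma~\ref{thm:empdim} translates into $\rank_\C \hat{U}=D$ at the constructed sample.

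The inductive step requires the following sub-claim: if $K_{i-1}\neq 0$, then for a Zariski-generic $z_i\in\W$ the $\R$-linear map $\mathrm{ev}_{z_i}:K_{i-1}\to\C$, $f\mapsto f(z_i)$, has image of real dimension $\min(\dim_\R K_{i-1},2)$. The easy case is $\dim_\R K_{i-1}=1$: a nonzero generator $f$ satisfies $f|_\W\not\equiv 0$ in $\C[\W]$, because otherwise $\conj{f(z)}=f(\conj{z})=0$ would force $f|_{\conj{\W}}=0$ as well and hence $f=0$; thus $f(z_i)\neq 0$ outside a proper subvariety of $\W$. The substantive case is $\dim_\R K_{i-1}\ge 2$: I pick two $\R$-independent elements $f_1,f_2\in K_{i-1}$ and argue by contradiction. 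If the image of $\mathrm{ev}_z$ lay on a real line in $\C$ for every $z\in\W$, then
\[
f_1(z)\,\conj{f_2(z)}=f_1(z)\,f_2(\conj{z})\in\R\quad\text{for all }z\in\W,
\]
so the meromorphic ratio $g:=f_2/f_1\in \C(\W)$ would take only real values on its domain.

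The hard part, and the analytic heart of the argument, is ruling out this scenario; my plan is to invoke the open mapping theorem from complex analysis. The function $g$ is holomorphic on $\W^{\mathrm{reg}}\setminus\{f_1=0\}$, which is a connected complex manifold because the smooth locus of an irreducible complex variety is connected in the Euclidean topology and $\{f_1=0\}$ is a proper analytic subset of complex codimension one. A holomorphic function from a connected complex manifold whose image lies in $\R\subsetneq\C$ (a set with empty interior) must be constant, so $g\equiv c\in\R$, which forces $f_2=c\,f_1$ and contradicts $\R$-independence of $f_1,f_2$. With this analytic input secured, the sub-claim holds, the inductive construction goes through, and the witness tuple $(z_1^*,\ldots,z_S^*)$ together with any nonvanishing $D\times D$ minor at it yields the c-generic polynomial, completing the argument.
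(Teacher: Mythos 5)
Your proof is correct, and it travels a genuinely parallel but differently organized route to the same destination. The paper inducts column-by-column on the evaluation matrix (alternating $\Im(v(z_j))$ and $\Re(v(z_j))$ and, at each step, producing a left-kernel vector $\mu$ whose associated polynomial $q_\mu = \mu^T v$ already vanishes at the earlier samples); you induct sample-by-sample on the dual object, the kernel subspace $K_i=\{f\in\mathcal{L}_{2d}:f(z_1)=\dots=f(z_i)=0\}$, and show its real dimension drops by two at each generic step. These are two sides of the same coin, and both reduce to the same key obstacle: one must rule out that some fixed real-linear relation among the real and imaginary parts of the evaluations persists for \emph{every} choice of the next sample on $\W$.

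Where you do better: you make the crucial analytic step fully explicit. Your argument that the meromorphic ratio $g=f_2/f_1$ on $\W^{\mathrm{reg}}\setminus\{f_1=0\}$ (a connected complex manifold, since the regular locus of an irreducible variety is connected and removing an analytic hypersurface does not disconnect it) cannot take values only in $\R$ unless it is constant is exactly the open-mapping/Cauchy--Riemann input needed. The paper, by contrast, constructs $q_\mu$ and asserts that $q_\mu(z_j)+q_\mu(\conj{z_j})=2\Re(q_\mu(z_j))$ gives a ``nonzero algebraic equation'' without explaining why $\Re(q_\mu)$ cannot vanish identically on all of $\W$; this requires precisely the same analytic fact (a holomorphic function with values in $i\R$ is locally constant), plus the observation that $\mu$ annihilates $v(z_1)$, which rules out the residual possibility of $q_\mu|_\W$ being a nonzero purely imaginary constant. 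So your write-up actually fills in a step the paper elides. Your framing of the c-generic conclusion — exhibit one witness tuple, then observe that the full-rank condition on $\hat U$ is the nonvanishing of a $D\times D$ minor, which is a polynomial in the $z_i,\conj{z_i}$ because $v$ has real coefficients — is also a clean and correct way to package the genericity argument that matches the paper's definition exactly. One small point worth noting in a final version: the connectivity/open-mapping argument is vacuous when $\dim\W=0$, but in that degenerate case the sub-claim holds trivially, so nothing is lost.
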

\begin{proof}
  Let $v(x)\in \mathcal{R}^D$ be a basis of $\mathcal{L}_{2d}$.
  Let $Z_j:=\{z_1,\ldots,z_j\}$, let $V_j\in \C^{D\times j}$ be the matrix with columns $\{v(z)\}_{z\in Z_j}$ and let $\hat{V}_j:=\left[\Re(V_j)|\Im(V_j)\right]\in \R^{D\times 2j}$. 
  Also denote $W_j := [\hat{V}_j| \Im(v(z_{j+1}))]\in \R^{D\times 2j+1}$.
  Because of Lemma~\ref{thm:empdim}, we just need to show that the matrix $\hat{V}_S$ has rank $D$.
  To this end, we will show the following statements:
  \begin{itemize}
    \item if $\hat{V}_{j-1}$ is full rank then $W_{j-1}$ is full rank c-generically.
    \item if $W_{j-1}$ is full rank then $\hat{V}_{j}$ is full rank c-generically.
  \end{itemize}
  Clearly these statements imply that $\hat{V}_S$ is full rank.
  Given the similarity between the two of them, we only prove the latter.

  Let $j\leq D/2$ and assume that $W_{j-1}$ is full rank.
  We will show that there is a polynomial $Q\in\C[Z_j,\conj{Z_j}]$ which is not identically zero on $\W$, and such that $\hat{V}_j$ is full rank whenever $Q(Z_j,\conj{Z_j})\neq 0$.

  Assume that $\hat{V}_{j}$ is not full rank.
  Then there must exist a vector $\lambda\in \R^{2j-1}$ such that 
  $$v(z_{j})+ v(\conj{z_{j}}) = 2\Re(v(z_{j})) = W_{j-1}\lambda.$$
  As $W_{j-1}$ has less than $D$ columns, there is some nonzero vector $\mu\in \R^D$ in its left kernel.
  Note that $\mu=\mu(Z_j,\conj{Z_j})$ can be parametrized as a rational function of $Z_j,\conj{Z_j}$, given that $W_{j-1}$ is full rank.
  Let $q_\mu(x) := \mu^T v(x)\in \mathcal{R}$, which is nonzero due to the linear independence of $v(x)$.
  Observe that
  \begin{align*}
   q_\mu(z_{j}) + q_\mu(\conj{z_{j}}) 
   = \mu^TW_{j-1}\,\lambda
  = 0.
  \end{align*}
  As the coefficients of $q_\mu$ are rational functions on $Z_j,\conj{Z_j}$, we conclude that the samples satisfy a nonzero algebraic equation $Q\in\C[Z_{j},\conj{Z_{j}}]$.
\end{proof}
\begin{rem}
  If the samples are real, it can be shown in a similar way that we need $|Z|\geq D$.
\end{rem}

\subsection{Verifying the number of samples}

We just showed that, under genericity assumptions, the poisedness property is satisfied whenever we have as many samples as the dimension of the space.
Concretely, we need to have $\lceil D/2\rceil$ complex samples, where $D=\dim(\mathcal{L}_{2d})$.
However, as the dimension $D$ is not known a priori, it is uncertain how many samples to take.
Therefore, we need some way to estimate such dimension, and the natural quantity to consider is the empirical dimension $D_{e}$.
The following corollary gives us a simple test that guarantees that $D = D_e$.

\begin{cor}\label{thm:sufficientsamples}
  Let $\W\subset \C^n$ be an irreducible variety, let $\V = \W\cup \conj{\W}$ and let $\mathcal{R}=\R[\V]$.
  Let $\mathcal{L}_{2d}\subset \mathcal{R}$ be a linear subspace and let $Z\subset \W$ be a c-generic set of samples.
  Let $D$ be the dimension of $\mathcal{L}_{2d}$ and let $D_e$ be its empirical dimension with respect to $Z$.
  If $D_e<2|Z|$ then $(\mathcal{L}_{2d},Z)$ is poised (i.e., $D = D_e$).
\end{cor}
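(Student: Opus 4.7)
My plan is to push the inductive argument in the proof of Theorem~\ref{thm:irreduciblepositive} one notch further, establishing the sharper statement that c-generically $D_e = \min(2|Z|, D)$, from which the corollary will follow immediately. To begin, I will invoke Lemma~\ref{thm:empdim} to reduce poisedness to the equality $D = D_e$. Since the bound $D_e \le D$ is automatic (the matrix $\hat{U}$ has only $D$ rows), the task reduces to proving the reverse inequality $D \le D_e$ under the hypothesis $D_e < 2|Z|$.

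The key claim I will establish is that, c-generically, $r_j := \rank \hat{V}_j$ equals $\min(2j, D)$ for every $j = 0, 1, \ldots, S := |Z|$. I will prove this by induction on $j$, recycling the two inductive steps from the proof of Theorem~\ref{thm:irreduciblepositive}: if $r_{j-1} = 2(j-1) < D$, then a c-generic $z_j$ first forces $\rank W_{j-1} = 2j - 1$ (provided $2j - 1 \le D$) and then $\rank \hat{V}_j = 2j$ (provided $2j \le D$); otherwise $r_{j-1} = D$ already, and monotonicity combined with the cap $r_j \le D$ gives $r_j = D = \min(2j, D)$.

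Specializing to $j = S$ then yields $D_e = \min(2|Z|, D)$ c-generically, and the hypothesis $D_e < 2|Z|$ rules out the case $\min(2|Z|, D) = 2|Z|$, forcing $D_e = D$ as required, i.e., $(\mathcal{L}_{2d}, Z)$ is poised. The main subtlety I anticipate is the boundary behavior when $2j$ crosses $D$, particularly for odd $D$, where the rank must increase by only $1$ rather than $2$ at the critical step. I expect this to be handled by applying the same c-genericity argument to a single additional real direction at a time, so it should amount to careful bookkeeping rather than a genuinely new idea.
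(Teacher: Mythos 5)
Your proposal is correct and takes essentially the same approach as the paper. The paper's proof is a compressed version of your argument: it observes directly that if $2|Z| < D$, then the inductive rank-building in the proof of Theorem~\ref{thm:irreduciblepositive} forces $D_e = 2|Z|$ (your $D_e = \min(2|Z|,D)$ specialised to that case), so the hypothesis $D_e < 2|Z|$ forces $2|Z| \geq D$, at which point Theorem~\ref{thm:irreduciblepositive} applies; your explicit formula and the handling of the odd-$D$ boundary via the intermediate matrix $W_{j-1}$ are exactly the bookkeeping the paper leaves implicit.
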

\begin{proof}
  If $2|Z| < D$ it follows from the proof of Theorem~\ref{thm:irreduciblepositive} that $D_e = 2|Z|$.
  Therefore, we must have that $2|Z| \geq  D$, and thus $(\mathcal{L}_{2d},Z)$ is poised because of Theorem~\ref{thm:irreduciblepositive}.
\end{proof}

\begin{algorithm}
  \caption{Test samples}
  \label{alg:testSamples}
  \begin{algorithmic}[1]
    \Require{Polynomial vector $u(x)$, samples $Z$ of a variety $\V$}
    \Ensure{``True'', if generically we must have that $(\mathcal{L}_{2d},Z)$ is poised, where $\mathcal{L}_{2d}\subset \R[\V]$ is spanned by $u(x)u(x)^T$. 
      ``False'', if we cannot guarantee it.}
    \Procedure{GoodSamples}{$u(x),Z$}
    \State $\hat{U}_2 :=$ matrix with columns $\mathrm{vec}(u(z)u(z)^T)$, for $z\in Z\cup \conj{Z}$
    \State \Return False \algorithmicif\ $\hat{U}_2$ has full column rank \algorithmicelse\ True
    \EndProcedure
  \end{algorithmic}
\end{algorithm}

The above corollary suggests a simple strategy that is summarized in Algorithm~\ref{alg:testSamples}.
We form the vector $u_2(x) = \mathrm{vec}(u(x)u(x)^T)$, whose entries span $\mathcal{L}_{2d}$.
Then we build the matrix of evaluations $\hat{U}_2$ with columns $u_2(z)$ for $z\in Z\cup \conj{Z}$.
The rank of this matrix is the empirical dimension~$D_e$.
If $\hat{U}_2$ does not have full column rank the above corollary holds.

\begin{rem}
  Consider the Hermitian matrix $\hat{U}_2^*\hat{U}_2$, where $^*$ denotes the conjugate transpose.
  This matrix is often much smaller than $\hat{U}_2$, and it can be constructed efficiently as
  \begin{align*}
    \hat{U}_2^*\hat{U}_2 = [\langle u(z_i),u(z_j)\rangle^2]_{z_i,z_j \in Z\cup \conj{Z}} = (\hat{U}^*\hat{U})\circ(\hat{U}^*\hat{U})
  \end{align*}
  where $\circ$ denotes the Hadamard product.
  Therefore, it is practical to use matrix $\hat{U}_2^*\hat{U}_2$ instead of $\hat{U}_2$, given that they have the same rank.
\end{rem}

\begin{exmp}
  Consider the case of Example~\ref{exmp:SO2_samples}.
  We used $S=3$ samples to compute the pre-certificate.
  To verify that the number of samples was sufficient, we construct the matrix
  \begin{align*}
    \hat{U}_2^*\hat{U}_2 = \left[\begin{smallmatrix}
   1.5581            &-0.2937 + 0.2562i  & 0.1730 - 0.1158i  & 0.0902 + 0.1118i  & 0.0981            &-0.0676 - 0.0720i\\
  -0.2937 - 0.2562i  & 1.5581            & 0.1730 + 0.1158i  & 0.0981            & 0.0902 - 0.1118i  &-0.0676 + 0.0720i\\
   0.1730 + 0.1158i  & 0.1730 - 0.1158i  & 0.2535            &-0.0676 - 0.0720i  &-0.0676 + 0.0720i  & 0.1396\\
   0.0902 - 0.1118i  & 0.0981            &-0.0676 + 0.0720i  & 1.5581            &-0.2937 - 0.2562i  & 0.1730 + 0.1158i\\
   0.0981            & 0.0902 + 0.1118i  &-0.0676 - 0.0720i  &-0.2937 + 0.2562i  & 1.5581            & 0.1730 - 0.1158i\\
  -0.0676 + 0.0720i  &-0.0676 - 0.0720i  & 0.1396            & 0.1730 - 0.1158i  & 0.1730 + 0.1158i  & 0.2535 
\end{smallmatrix}\right]
  \end{align*}
  The rank of this matrix is $5$, and thus the condition from Corollary~\ref{thm:sufficientsamples} is satisfied.
  Therefore, the number of samples is sufficient.
\end{exmp}

\subsection{Reducible varieties}\label{s:reducible}

The analysis made so far makes an irreducibility assumption on the variety $\V$.
This assumption is satisfied for many varieties, in particular for any variety parametrized by $\C^n$.
Even if $\V$ is not irreducible, we can always work with each of its irreducible components independently.
Indeed, note that $p\geq 0$ on some variety if and only if $p\geq 0$ on each irreducible component.

Nonetheless, there are circumstances in which we may not want to impose an irreducibility assumption.
For example, if the variety has bad numerical properties and thus its irreducible components cannot be accurately estimated.
In such situations, we can repeat the same analysis from before if we have some method that samples points from each irreducible component.
For instance, if we intersect the variety $\V$ with a generic hyperplane of complementary dimension, the intersection is a finite set that contains points in each irreducible component.
Note that we do not know which component do the samples belong to, but we are certain that there is at least one sample in each component.

The following corollary shows that if we have a sample set with enough points on each irreducible component, then $(\mathcal{L}_{2d},Z)$ is poised.

\begin{cor}
  Let $\W\subset\C^n$ be a variety, let $\V = \W\cup\conj{\W}$ and let $\mathcal{R}=\R[\V]$.
  Let $\mathcal{L}_{2d}\subset \mathcal{R}$ be a linear subspace.
  Let $\W = \W_1\cup\cdots\cup\W_r$ be the irreducible decomposition, and let $Z_1\subset \W_1, \ldots,Z_r\subset \W_r$ be c-generic sets of samples.
  If $|Z_i|\geq D/2$ for all $i$, where $D:=\dim(\mathcal{L}_{2d})$, then $(\mathcal{L}_{2d},Z)$ is poised.
\end{cor}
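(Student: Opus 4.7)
The plan is to reduce the reducible case to Theorem~\ref{thm:irreduciblepositive} by restricting $\mathcal{L}_{2d}$ to each ``conjugate pair'' of irreducible components and applying the irreducible result there. Concretely, write $\V_i := \W_i \cup \conj{\W_i}$ for $i=1,\ldots,r$. Each $\V_i$ is self-conjugate, so $\mathcal{R}_i := \R[\V_i]$ is well defined, and the inclusion $\V_i \subset \V$ induces a restriction homomorphism $\pi_i : \mathcal{R} \to \mathcal{R}_i$ sending $f \mapsto f|_{\V_i}$.

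The key observation is that the image $\mathcal{L}_{2d}^{(i)} := \pi_i(\mathcal{L}_{2d}) \subset \mathcal{R}_i$ has dimension at most $D = \dim \mathcal{L}_{2d}$, since $\pi_i$ is linear. Since $Z_i \subset \W_i$ is c-generic in $\W_i$ with $|Z_i| \geq D/2 \geq \dim(\mathcal{L}_{2d}^{(i)})/2$, Theorem~\ref{thm:irreduciblepositive} applies to the pair $(\mathcal{L}_{2d}^{(i)}, Z_i)$ inside $\mathcal{R}_i$ and tells us it is poised. (To apply that theorem one needs a c-generic choice in a single irreducible $\W_i$, which is exactly what is assumed; the other $Z_j$ do not enter.)

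Now suppose $q \in \mathcal{L}_{2d}$ vanishes on $Z := Z_1 \cup \cdots \cup Z_r$. Fix $i$. Then $\pi_i(q) \in \mathcal{L}_{2d}^{(i)}$ vanishes on $Z_i$, so poisedness of $(\mathcal{L}_{2d}^{(i)}, Z_i)$ forces $\pi_i(q) = 0$ in $\mathcal{R}_i$, i.e.\ $q$ vanishes on $\V_i$. Since this holds for every $i$ and $\V = \V_1 \cup \cdots \cup \V_r$, we conclude $q$ vanishes on all of $\V$, hence $q = 0$ in $\mathcal{R}$. This is exactly the definition of $(\mathcal{L}_{2d}, Z)$ being poised.

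The only mildly subtle point, and the one I would be most careful about, is the genericity bookkeeping: the theorem invoked on $\V_i$ requires that $Z_i$ be c-generic \emph{as a tuple in $\W_i$}, and the correctness must hold simultaneously for all $i$. This is fine because the definition of c-genericity for a tuple $(Z_1,\ldots,Z_r)$ already asks for nonvanishing of an independent polynomial $q_i \in \C[Z_i,\conj{Z_i}]$ on each component, so we can take $q_i$ to be the witnessing polynomial produced by the proof of Theorem~\ref{thm:irreduciblepositive} applied to $(\mathcal{L}_{2d}^{(i)}, Z_i)$, and the conditions $q_i(Z_i,\conj{Z_i}) \neq 0$ are imposed independently on disjoint sets of variables.
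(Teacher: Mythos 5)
Your proof is correct and follows essentially the same route as the paper's: restrict to each conjugate pair $\V_i = \W_i\cup\conj{\W_i}$, note that the restricted subspace has dimension at most $D$, invoke Theorem~\ref{thm:irreduciblepositive} on each component, and conclude via injectivity of the joint restriction map. Your final remark on the genericity bookkeeping (one witnessing polynomial $q_i$ per component, with independent variables) is exactly the point the paper relies on as well.
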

\begin{proof}
  Let $f\in \mathcal{L}_{2d}$ be such that $f(z)=0$ for all $z\in Z$.
  We want to show that $f$ is the zero polynomial in $\C[\V]$.
  Let $\V_i:=\W_i\cup \conj{\W_i}$ and let $\psi_i: \C[\V]\to \C[\V_i]$ be the restriction operator.
  It is clear that the dimension of $\psi_i(\mathcal{L}_{2d})$ is at most $D$.
  Thus, Theorem~\ref{thm:irreduciblepositive} says that $(\psi_i(\mathcal{L}_{2d}),Z_i)$ is poised whenever $q_i(Z_i,\conj{Z_i})\neq 0$, for some polynomial $q_i$ which is nonzero on $\W_i$.
  Note that $\psi_i(f)$ evaluates to zero on $Z_i$, and thus $\psi_i(f)$ must be the zero element in $\C[\V_i]$ whenever $q_i(Z_i,\conj{Z_i})\neq 0$.
  Finally, observe that $(\psi_1\times\cdots\times \psi_k):\C[\V]\to \C[\V_1]\times \cdots \times \C[\V_k]$ is injective.
  We conclude that whenever $q_i(Z_i,\conj{Z_i})\neq 0$ then $(\psi_1\times\cdots\times \psi_k)(f)$ is zero and thus $f$ must be zero.
\end{proof}

\section{Computing sampling certificates}\label{s:mainprocedure}

We already developed all the tools needed to find a sampling SOS certificate, and we now put them together.
Algorithm~\ref{alg:mainprocedure} summarizes our method for the case of an irreducible variety~$\V$.
Naturally, the most computationally expensive part is solving the SDP.
Recall from Theorem~\ref{thm:irreduciblepositive} that the number of samples required is
\begin{align}\label{eq:nsamples}
  S_{\min} := \lceil H_{\hat{\V}}(2d)/2\rceil,\qquad
  H_{\hat{\V}}(2d):=\dim(\mathcal{L}_{2d}) \leq 
  \min\left\{
  {n+2d\choose 2d},\;
  \deg \hat{V} {\dim \hat{V} + 2d \choose 2d}
  \right\},
\end{align}
where $\hat{\V} = \V\cup\conj{\V}$, and where we used the bound from~\eqref{eq:hilbertfunction}.
Since $H_{\hat{\V}}(2d)$ is unknown in general, we use a simple search strategy in Algorithm~\ref{alg:mainprocedure}.
The algorithm terminates when the number of samples is at least~$S_{\min}$.

In the case of a reducible variety, we might use Algorithm~\ref{alg:mainprocedure} for each of its irreducible components separately.
If we cannot reliably identify such components we need to take into account the considerations from Section~\ref{s:reducible}.

\begin{rem}[Zero-dimensional case]
  Note that a zero-dimensional variety is reducible, each component consisting of a single point.
  Thus, in such case our algorithm reduces to a brute-force enumeration over all solutions, and better strategies may exist.
  The main problem to address is that of producing small poised sets.
  We leave this as an open problem.
\end{rem}

\begin{algorithm}
  \caption{Sampling SOS}
  \label{alg:mainprocedure}
  \begin{algorithmic}[1]
    \Require{
      Polynomial $p\in\R[x]$ (given by an evaluation oracle),
      irreducible variety $\V\subset\C^n$ (given by a sampling oracle),
      degree bound $d\in\N$
    }
    \Ensure{$d$-SOS($\V$) certificate $F$, if it exists. 
      ``Null'', if no certificate exists.}
    \Procedure{SamplingSOS}{$p,\,\V,\,d$}
    \State $u(x):=$ vector with all monomials up to degree $d$
    \State $S:=$ initial guess on the number of samples (an upper bound is given in~\eqref{eq:nsamples})
    \State\label{line:samples}  $Z:=$ generic set of $S$ samples from $\V$
    \State $u(x):=$ \Call{OrthBasis}{$u(x),Z$} \Comment{find basis of $\R[\V]$}
    \If {\NOT\Call{GoodSamples}{$u(x),Z$}} \Comment{check if there are enough samples}
    \State increase $S$ and \GOTO \ref{line:samples}
    \EndIf
    \State $Q:=$ solution of SDP~\eqref{eq:SDPpaper}\; (\algorithmicif\ none \Return Null)
    \Comment{solve SDP}
    \State $F(x):= Q \bullet u(x)u(x)^T$
    \If {\NOT\Call{IsZero}{$p-F,\,\V$}} \Comment{verify correctness}
    \State the sample set was not generic enough; \GOTO \ref{line:samples}
    \EndIf
    \State \Return $F$
    \EndProcedure
  \end{algorithmic}
\end{algorithm}

\section{Examples}\label{s:optimization}

We now show several examples and numerical evaluations to illustrate our methodology.
We implemented our algorithms in Matlab, using SDPT3~\cite{sdpt3} to solve the semidefinite programs.
We also use Macaulay2~\cite{macaulay2} for Gr\"obner bases, and Bertini~\cite{bertini} for numerical algebraic geometry computations.
The experiments are performed on an i7 PC with $8$ cores of $3.40$ GHz, $15.6$ GB RAM, running Ubuntu $14.04$. 

We will compare our techniques with the following two methods: equations SOS \eqref{eq:sdprelaxation} and the (Gr\"obner bases based) quotient ring SOS \eqref{eq:quotientring}.
For the convenience of the reader unfamiliar with these methods, Appendix~\ref{s:sosvariety} illustrates them with concrete, simple examples.

\begin{rem}[Polynomial optimization]
Since some of the examples below are polynomial optimization problems, we recall that there are well studied SOS relaxations~\cite{Blekherman2013,laurent2009sums,Lasserre2009}.
Indeed, minimizing a polynomial $p(x)$ is equivalent to finding the largest $\gamma$ such that $p(x) - \gamma$ is nonnegative, which can be relaxed to be a $d$-SOS polynomial.
The solution $\gamma^*$ obtained with the SDP gives a valid lower bound on $p(x)$.
SOS lower bounds tend to be very good in practice, being the true minimum in certain applications~\cite{Blekherman2013,Lasserre2009}.
Moreover, if the minimizer $x^*$ is unique and the dual matrix is rank one, then $x^*$ might be recovered from the dual variables.
In particular, 
$x^* = \Re(\textstyle\sum_s y_s^* z_s)$ 
for the sampling SDP in~\eqref{eq:SDPpaper}, where $y_s^*$ are the (complex) dual variables of the equality constraints.
\end{rem}

\subsection{Nilpotent matrices}

Let $\V := \{X\in \C^{n\times n}: X^n=0\}$ be the variety of nilpotent matrices.
Let $p(X):= \det(X + \id_n)$, which is nonnegative on $\V$ (moreover, it is identically one).
We compare different SOS methodologies to certify this.

First, consider the sampling approach.
Let the degree bound $d=1$, and let us take $S={n+2\choose 2}$ samples, which are always sufficient.
Note that it is very easy to sample nilpotent matrices.
For instance, we can generate a random triangular matrix with zero diagonal, and then apply a similarity transformation.
For each sample $X_s\in \V$, we can efficiently evaluate $p(X_s)$ with Gaussian elimination.
As $p(X_s)=1$ for all samples $X_s$, we will obtain the trivial SOS decomposition $p(X)\equiv_\V (1)^2$.

Consider now the Gr\"obner bases approach.
Let $h\subset \R[X]$ be the $n^2$ equations given by $X^n=0$.
We want to compute a Gr\"obner basis of $h$.
Note, however, that the total number of terms in $h$ is on the order of $n^{n+1}$, and the polynomials are all of degree $n$.
Therefore, this Gr\"obner basis computation is extremely complicated.

If we are smarter, we can take a different set of defining equations of $\V$.
Consider the polynomial $Q_X(t):= \det(t\,\id_n - X) - t^n$, and let $h'\subset \R[X]$ be the equations given by the coefficients of $Q_X(t)$.
It turns out that $h'$ generates the radical ideal of $\langle h\rangle$, and moreover, it is a Gr\"obner basis \cite[\S 7]{Jantzen2004}.
However, $h'$ has more than $n!$ terms. 
Once we have the Gr\"obner basis $h'$, we need to compute the normal form of $p$.
To obtain this normal form we need to consider $p$ as a dense polynomial.
As both $p$ and $h'$ have on the order of $n!$ terms, performing this reduction is computationally intractable.
If we are able to reduce it, we will conclude that $p(X)\equiv_\V 1$, as before.

Finally, note that equations SOS suffers from the same problems of the Gr\"obner bases approach.
For this method there is an additional problem, which is that the monomial basis $u(X)$ will be very large in order to account for all the monomials in $p(X)$ and $h(X)$.
This problem was avoided in the previous methods because of the quotient ring reductions.

This example illustrates two of the advantages of the sampling formulation:
it avoids the algebraic problem of deciding which equations to use (e.g., $h$ vs. $h'$), and it allows the use of straight-line programs (e.g., Gaussian elimination) for more efficient evaluations.

\subsection{Weighted orthogonal Procrustes}\label{s:procrustes}
We consider a family of optimization problems over varieties of orthogonal matrices.
The Stiefel manifold $\mathit{St}(k,\R^n)$ is the set of orthonormal $k$-frames in $\R^n$.
We identify it with the set of matrices $X\in \R^{n\times k}$ such that $X^TX=\id_k$.
Note that we can easily sample points from this variety, for instance, by using the Cayley parametrization.
Alternatively, we can orthogonalize a random real matrix.

The weighted orthogonal Procrustes problem, also known as Penrose regression problem, asks for a matrix $X\in \mathit{St}(k,\R^n)$ that minimizes $\|AXC-B\|$, for some matrices $A\in\R^{m_1\times n}$, $B\in \R^{m_1\times m_2}$, $C\in\R^{k\times m_2}$.
There is no closed form solution for this problem, and several local optima may exist~\cite{Viklands:2006aa,Chu2001}.

Let $u(x)$ consist of all monomials up to some degree bound~$d$.
The sampling SDP is:
\begin{equation*}
\begin{aligned}
  &\max_{\gamma\in \R,\,Q\succeq 0} &&\gamma \\
  &\mbox{subject to} &&\|AX_sC-B\|^2 - \gamma = Q\bullet u(X_s)u(X_s)^T,&\qquad \mbox{ for }s=1,\ldots,S \\
  & &&X_s \in \mathit{St}(k,\R^n)
\end{aligned}
\end{equation*}

\begin{exmp}[\cite{Chu2001}, Ex~2]
  Let $(n,k,m_1,m_2)=(4,3,5,3)$ and consider the matrices
  \begin{align*}
A^T=\left[\begin{smallmatrix}
    0.2190 &0.0470 &0.6789 &0.6793 &0.9347\\
    0.3835 &0.5194 &0.8310 &0.0346 &0.0535\\
    0.5297 &0.6711 &0.0077 &0.3834 &0.0668\\
    0.4175 &0.6868 &0.5890 &0.9304 &0.8462
  \end{smallmatrix}\right], \quad
B^T=\left[\begin{smallmatrix}
    0.6526 &0.2110 & 0.2229 &-0.4104 &-0.9381\\
    0.6942 &0.2204 & 0.2015 & 0.2994 & 1.0943\\
    0.8299 &1.1734 &-0.1727 & 0.0474 &-0.2351
  \end{smallmatrix}\right], \quad
  C = \id_3.
  \end{align*}
  We consider the degree~$1$ SOS relaxation.
  Following Algorithm~\ref{alg:mainprocedure}, we find out that $S = 43$ complex (or $85$ real) samples are sufficient.
  More generally, the required number of samples is a half of $H_\V(2) = {nk+2\choose 2}-{k+1\choose 2}$.
  By solving the above SDP we obtain a lower bound of $1.118147$ on the minimum norm $\|AXC-B\|$.
  Furthermore, the dual SDP matrix has rank one, and thus we can recover a solution achieving such lower bound:
  \begin{align*}
(X^*)^T=\left[\begin{smallmatrix}
   -0.0895 & 0.7472 &0.2732 &-0.5992\\
    0.7726 &-0.1843 &0.6035 &-0.0702\\
   -0.5277 & 0.0163 &0.7309 & 0.4324
  \end{smallmatrix}\right].
  \end{align*}
\end{exmp}

Table~\ref{tab:stiefel} compares different SDP formulations of the degree~$1$ SOS relaxation of the weighted orthogonal Procrustes problem.
We consider the case where $m_1=n$ and $m_2=k$.
The table shows the number of variables/constraints and the computation time for the equations SDP and the sampling SDP.
The computation is performed on random instances, in which matrices $A$, $B$, $C$ are generated from the standard normal distribution.
For the sampling SDP we use the image form of the SDP (see Section~\ref{s:computing}), given that it has low codimension.
We remark that for the sampling SDP we include the preprocessing time, i.e., Algorithms~\ref{alg:orthBasis} and~\ref{alg:testSamples}.

We point out that the better performance of sampling SDP is due to the fact that it makes use of the quotient ring structure.
Although a similar sized SDP could be derived using Gr\"obner bases, Table~\ref{tab:stiefel} shows that Gr\"obner bases computation is very expensive, much more than solving the (larger) equations SDP.
In particular, Macaulay2 ran out of memory for $n=7$, $k=5$.

\begin{table}[htbp]
  \centering
  \caption{Degree~$1$ SOS relaxations for the weighted orthogonal Procrustes problem}
    \begin{tabular}{cc||ccc||ccc||c}
    \toprule
    \multirow{2}[0]{*}{$n$} & \multirow{2}[0]{*}{$k$} & \multicolumn{3}{c||}{Equations SDP} & \multicolumn{3}{c||}{Sampling SDP} & {Gr\"obner bases} \\
          &       & variables & constraints & time$(s)$ & variables & constraints & time$(s)$ & time$(s)$ \\
    \midrule
    4     & 2     & 178   & 73    & 0.52   & 46    & 42    & 0.10 & 0.00 \\
    5     & 3     & 682   & 233   & 0.65   & 137   & 130   & 0.11 & 0.03 \\
    6     & 4     & 1970  & 576   & 1.18   & 326   & 315   & 0.15 & 9.94 \\
    7     & 5     & 4727  & 1207  & 3.56   & 667   & 651   & 0.31 & out of mem.\\
    8     & 6     & 9954  & 2255  & 13.88  & 1226  & 1204  & 0.70 & out of mem.\\
    9     & 7     & 19028 & 3873  & 42.14  & 2081  & 2052  & 2.11 & out of mem.\\
    10    & 8     & 33762 & 6238  & 124.43 & 3322  & 3285  & 5.07 & out of mem.\\
    \bottomrule
    \end{tabular}%
  \label{tab:stiefel}%
\end{table}%

\subsection{Trace ratio problem}\label{s:traceratio}
We now consider a problem on the Grassmaniann manifold $\mathit{Gr}(k,\R^n)$, which is the set of all $k$-dimensional subspaces of $\R^n$.
Note that we can easily sample points on $\mathit{Gr}(k,\R^n)$ by considering the subspace spanned by $k$ random vectors.
By identifying a subspace with the orthogonal projection onto it, we can view $\mathit{Gr}(k,\R^n)$ as the set of matrices $X\in \SR^{n}$ satisfying $X^2=X$ and $\trace(X)=k$; so this is indeed a variety.
The trace ratio problem looks for the maximizer of $\frac{\trace(AX)}{\trace(BX)}$ on $\mathit{Gr}(k,\R^n)$,
for some given matrices $A,B\in \SR^{n}$, $B\succ 0$.
This problem arises in machine learning, and it can be efficiently solved by iterative methods, given that it has a unique local maximum~\cite{Wang:2007aa}.
We consider the following variation:
\begin{align*}
  \max_{X\in \mathit{Gr}(k,\R^n)} \frac{\trace(AX)}{\trace(BX)} + \trace(CX)
\end{align*}
for some $A,B,C\in \SR^{n}$, $B\succ 0$.
This problem may have several local maxima and thus local methods may not converge to the global optimum \cite{Zhang2014,Zhang2014a}.

To obtain an SOS relaxation, note that the problem is equivalent to minimizing $\gamma$ such that $\trace(BX)(\gamma - \trace(CX)) - \trace(AX)$ is nonnegative on $\mathit{Gr}(k,\R^n)$.
Thus, the SDP to consider is:
\begin{equation*}
\begin{aligned}
  &\min_{\gamma\in \R,\,Q\succeq 0} &&\gamma \\
  &\mbox{subject to} &&\trace(BX_s)(\gamma - \trace(CX_s)) - \trace(AX_s) = Q\bullet u(X_s)u(X_s)^T,&\quad \mbox{ for }s=1,\ldots,S \\
  & &&X_s \in \mathit{Gr}(k,\R^n)
\end{aligned}
\end{equation*}

\begin{exmp}[\cite{Zhang2014a}, Ex~3.1]
  Let $n=3,k=2$ and consider the matrices $A,B,C$ from below.
  For the degree bound $d=1$, Algorithm~\ref{alg:mainprocedure} gives that $S=8$ complex (or 15 real) samples are sufficient.
  In general, the number of samples is a half of $H_\V(2)= {\frac{1}{2}(n^2+n)\choose 2}$.
  Solving the above SDP we get an upper bound of $28.692472$.
  As the dual matrix has rank one, we can recover the optimal solution $X^*$.
  \begin{align*}
A=\left[\begin{smallmatrix}
    11 & 5 &8\\
     5 &10 &9\\
     8 & 9 &5
  \end{smallmatrix}\right], \quad
B=\left[\begin{smallmatrix}
     7 & 7 &7\\
     7 &10 &8\\
     7 & 8 &8
  \end{smallmatrix}\right], \quad
C=\left[\begin{smallmatrix}
    15 &10 &9\\
    10 & 7 &6\\
     9 & 6 &6
  \end{smallmatrix}\right], \quad
X^*=\left[\begin{smallmatrix}
    0.61574 & 0.15424 & 0.46132\\
    0.15424 & 0.93809 &-0.18517\\
    0.46132 &-0.18517 & 0.44617
  \end{smallmatrix}\right]
  \end{align*}
\end{exmp}

As before, we compare the equations SDP and the sampling SDP of the degree~$1$ SOS relaxation.
Table~\ref{tab:grassman} shows the number of variables/constraints and the computation time on random instances for both methods.
It also shows the computation time of Gr\"obner bases.

\begin{table}[htbp]
  \centering
  \caption{Degree~$1$ SOS relaxations for the trace ratio problem}
    \begin{tabular}{cc||ccc||ccc||c}
    \toprule
    \multirow{2}[0]{*}{$n$} & \multirow{2}[0]{*}{$k$} & \multicolumn{3}{c||}{Equations SDP} & \multicolumn{3}{c||}{Sampling SDP} & {Gr\"obner bases} \\
          &       & variables & constraints & time$(s)$ & variables & constraints & time$(s)$ & time$(s)$ \\
    \midrule
    4     & 2     & 342   & 188   & 0.47  & 56    & 45    & 0.10 & 0.00   \\
    5     & 3     & 897   & 393   & 0.71  & 121   & 105   & 0.11 & 0.02   \\
    6     & 4     & 2062  & 738   & 1.34  & 232   & 210   & 0.15 & 0.20   \\
    7     & 5     & 4265  & 1277  & 3.62  & 407   & 378   & 0.19 & 6.04   \\
    8     & 6     & 8106  & 2073  & 9.06  & 667   & 630   & 0.34 & 488.17 \\
    9     & 7     & 14387 & 3198  & 23.83 & 1036  & 990   & 0.61 & out of mem.\\
    10    & 8     & 24142 & 4733  & 58.17 & 1541  & 1485  & 1.18 & out of mem.\\
    \bottomrule
    \end{tabular}%
  \label{tab:grassman}%
\end{table}%

\subsection{Low rank approximation}

Consider the problem of finding the nearest rank~$k$ tensor.
Let $\C^{n_1\times\cdots\times n_\ell}$ denote the set of tensors of order $\ell$ and dimensions $(n_1,\ldots,n_\ell)$ and let $\C^{n_1\times\cdots\times n_\ell}_{\,\leq k}$ be the closure of the space of tensors of rank at most~$k$.
Note that we can easily generate generic samples of rank~$k$ tensors.
Given a real tensor $T\in \R^{n_1\times\cdots\times n_\ell}$, the rank~$k$ approximation problem asks for the nearest point $X\in \R^{n_1\times\cdots\times n_\ell}_{\,\leq k}$, 
i.e., the minimizer of $\|T-X\|^2$ where $\|\cdot\|$ denotes the norm of the vectorization.

Let $d:=\lfloor k/2 \rfloor +1$ and let $u(X)$ be the vector with all monomials of degree at most $d$.
Denoting $\varsigma(X):=\|X\|^2$, 
we consider the following SDP relaxation:
\begin{equation*}
\begin{aligned}
  &\max_{\gamma\in \R,\,Q\succeq 0} &&\gamma \\
  &\mbox{subject to} &&(\|T-X_s\|^2- \gamma)\,\varsigma(X_s)^{d-1} = Q\bullet u(X_s)u(X_s)^T,&\qquad \mbox{ for }s=1,\ldots,S \\
  & &&X_s \in \C^{n_1\times\cdots\times n_\ell}_{\,\leq k}
\end{aligned}
\end{equation*}

We remark that computing the defining equations of the variety $\C^{n_1\times\cdots\times n_\ell}_{\,\leq k}$ is very complicated~\cite{Landsberg:2007aa}.
This means that using traditional SOS methods is usually not possible.

\begin{exmp}[\cite{De-Lathauwer:2000aa}, Ex 3]
  Let $T\in \R^{2\times 2\times 2\times 2}$ be the tensor whose nonzero entries are
  \begin{align*}
    T_{1111} = 25.1,\quad
    T_{1121} = 0.3, \quad
    T_{1212} = 25.6,\quad
    T_{2111} = 0.3, \quad
    T_{2121} = 24.8,\quad
    T_{2222} = 23.
  \end{align*}
  Consider the rank one approximation problem.
  Solving the above SDP ($d=1,S=49$) we obtain the lower bound $42.1216$ on the minimum distance $\|T-X\|$.
  From the dual solution we recover the minimizer $X^*$, whose only nonzero entry is
    $X^*_{1212} = 25.6$.

  Consider now the rank three approximation problem.
  The above SDP ($d=2, S=2422$) gives a lower bound of $23.0000$. 
  Again, we can recover the minimizer $X^*$, whose nonzero entries are
  \begin{align*}
    X^*_{1111} = 25.1,\quad
    X^*_{1121} = 0.3, \quad
    X^*_{1212} = 25.6,\quad
    X^*_{2111} = 0.3, \quad
    X^*_{2121} = 24.8
  \end{align*}
  To see that $X^*$ is rank three, note that after removing the entry $25.6$ we are left with a $2\times 2$ matrix.
\end{exmp}

\subsection{Certifying infeasibility}
Given a complex variety $\V\subset \C^n$ consider the problem of certifying that $\V\cap\R^n$ is empty.
A \emph{Positivstellensatz} infeasibility certificate consists in showing that the constant polynomial $-1$ is SOS on the variety $\V$ \cite{Parrilo2003}.
For instance, if $\V=\{i,-i\}\subset \C$, a Positivstellensatz certificate is that $-1 = x^2$ on the variety $\V$.
We take an approach of numerical algebraic geometry, where we first compute a numerical irreducible decomposition of~$\V$, and then use sampling SOS to obtain the infeasibility certificate.
For a given vector $u(x)$ the SDP problem to solve is:
\begin{equation*}
\begin{aligned}
  &\mbox{find} && Q\succeq 0 \\
  &\mbox{subject to} &&-1 = Q\bullet u(z_s)u(z_s)^T,&\qquad \mbox{ for }s=1,\ldots,S \\
  & &&z_s \in \V
\end{aligned}
\end{equation*}

\begin{exmp}
  Let $\V\subset\C^9$ be the positive dimensional part of the cyclic $9$-roots problem.
  The cyclic $9$-roots equations are:
  {\footnotesize
  \begin{gather*}
x_1 + x_2 + x_3 + x_4 + x_5 + x_6 + x_7 + x_8 + x_9\\
x_1x_2 + x_2x_3 + x_3x_4 + x_4x_5 + x_5x_6 + x_6x_7 + x_7x_8 + x_8x_9 + x_9x_1\\
x_1x_2x_3 + x_2x_3x_4 + x_3x_4x_5 + x_4x_5x_6 + x_5x_6x_7 + x_6x_7x_8+ x_7x_8x_9  + x_8x_9x_1 + x_9x_1x_2\\
\vdots\\
x_1x_2x_3x_4x_5x_6x_7x_8 + x_2x_3x_4x_5x_6x_7x_8x_9 + \cdots +  x_9x_1x_2x_3x_4x_5x_6x_7\\
x_1x_2x_3x_4x_5x_6x_7x_8x_9 - 1
  \end{gather*}
}
  The zero set of these equations consists of a two-dimensional variety $\V$ of degree~$18$, and $6156$ isolated solutions~\cite{Faugere2001}.
  We remark that computing a Gr\"obner basis of these equations is very complicated unless its special structure is exploited.
  Indeed, Macaulay2 ran out of memory after $5$ hours of computation.

  We computed the irreducible decomposition of $\V$ using Bertini;
  it took $2h\, 45m$ with the default parameters. 
  The variety $\V$ decomposes into three pairs of conjugate irreducible varieties (each pair of degree~6).
  For each component we proceed to compute a sampling $2$-SOS certificate.
  We require $31$ complex samples on each component, which we obtained from Bertini in less than a second.
  Note that the upper bound from~\eqref{eq:nsamples} predicted $\frac{1}{2}\cdot \min\{{13\choose 4},\, 6 {6\choose 4}\}= 45$ samples.
  For each $j=0,\ldots,5$ we solved the respective sampling SDP, obtaining an infeasibility certificate of the form
  \begin{align*}
    -1 &= (R_j\,u(x))^T(R_j\,u(x)),\qquad \mbox{ for } x \in \V_j.
  \end{align*}
  This allows us to conclude that each irreducible component of $\V$ is purely complex.
  For instance, for the first irreducible component $\V_0$ it takes only $0.74s$ to obtain the certificate shown in Figure~\ref{fig:cycliccertificate}. 
\begin{figure}[p]
  \centering
  {\scriptsize 
    \begin{align*}
&\qquad\qquad u(x) = (x_8^2, x_7x_9, x_6^2, x_5x_9, x_5x_7, x_4^2, x_3x_6, x_2x_7, x_2x_6, x_2^2, x_1x_3, x_1^2, x_8, x_7, x_6, x_5, x_3, x_1, 1)\\
R_0&=\left[\begin{smallmatrix}\\
  -0.9638686  &-0.3445318  & 0.8395791  &-1.9531033  & 0.6329543  &-0.0152284  & 0.0238164  & 0.4701138  &-1.9766327  &-0.8363703\\
   0.3474835  &-0.3993919  & 0.5501348  &-1.2198730  &-0.2314149  & 0.0354563  & 1.0086575  & 0.4018444  & 1.0316339  &-0.6193326\\
   0.0117704  &-0.5278490  & 0.6157589  &-0.3131173  & 0.2207819  &-0.0080541  & 0.4038186  & 0.1500184  &-0.2618475  & 0.3089739\\
  -0.0131866  & 0.1597228  & 0.1191077  &-0.1088218  & 0.0697348  &-0.1149430  &-0.5067092  &-0.1883695  &-0.5993569  & 0.0244521\\
  -0.4504113  &-0.0761266  & 0.0056933  & 0.1535964  &-0.0860039  & 0.0007534  & 0.1264270  & 0.0880389  &-0.0927822  &-0.1429983\\
  -0.0804265  & 0.1450405  &-0.0077285  &-0.1657304  &-0.3240087  & 0.0014097  & 0.0631496  &-0.4083965  & 0.0191162  & 0.0854950\\
   0.0192110  & 0.1019831  &-0.1208989  &-0.1821975  &-0.1203214  & 0.0405222  & 0.0595267  &-0.1921851  &-0.0669972  &-0.1710978\\
  -0.1242984  & 0.1450764  &-0.2725352  &-0.1145423  & 0.0498037  & 0.0036466  &-0.0705293  & 0.2012444  & 0.0873671  &-0.2016367\\
  -0.0724047  &-0.0072012  &-0.0659910  & 0.1174698  &-0.0830511  & 0.0339559  & 0.1872153  &-0.0010648  &-0.1488432  & 0.1014557\\
  -0.1440253  & 0.0026597  &-0.1198142  & 0.0147434  & 0.1104305  & 0.0249783  & 0.0246079  &-0.0286915  &-0.0633959  & 0.0786306\\
   0.0872131  &-0.0503333  &-0.0426310  &-0.0108485  &-0.1538320  & 0.0351373  &-0.0895051  & 0.0994606  &-0.0858176  & 0.0033518\\
   0.0508126  & 0.0850471  &-0.0581353  &-0.0654513  & 0.0413446  &-0.0119532  & 0.0757765  & 0.0459333  &-0.0169990  & 0.1009677\\
   0.0112157  & 0.0251923  &-0.0096306  &-0.0680984  &-0.0005761  & 0.0111481  &-0.0373533  & 0.0293210  & 0.0134771  & 0.1119457\\
   0.0240035  &-0.1089833  &-0.0750114  &-0.0316807  & 0.0593823  &-0.0045386  &-0.0385441  &-0.0541272  & 0.0162211  & 0.0093965\\
   0.0727416  & 0.0067146  &-0.0258618  & 0.0206007  & 0.0284529  &-0.0125337  & 0.0450957  &-0.0142651  &-0.0460162  &-0.0377493\\
   0.0018262  & 0.0096039  & 0.0092749  &-0.0098153  & 0.0116513  & 0.0124708  &-0.0166840  &-0.0307406  & 0.0039079  &-0.0005090\\
   0.0167276  & 0.0418916  & 0.0339853  & 0.0127189  & 0.0353915  & 0.0352643  &-0.0230590  & 0.0037635  &-0.0020096  &-0.0118329\\
  -0.0000118  & 0.0028062  & 0.0010184  &-0.0000054  & 0.0000008  &-0.0076840  &-0.0004971  & 0.0000191  & 0.0000175  & 0.0000097\\
   0.0000403  & 0.0016845  &-0.0003547  &-0.0000443  &-0.0000077  & 0.0102089  & 0.0023837  &-0.0001760  &-0.0000343  &-0.0000479
\end{smallmatrix}\right.\\
&\qquad\qquad\left.\begin{smallmatrix}\\
   0.1130541  & 0.0243746  &-1.0601901  &-0.5184653  & 0.5389394  &-0.5290480  & 1.4666654  &-0.3021666  &-0.0722647\\
  -0.4838530  & 0.0446110  & 0.0443714  & 0.0400056  &-1.4678116  &-0.8155807  &-0.3859305  & 0.4715178  & 0.0326426\\
  -0.1549109  & 0.0903295  & 0.6966522  & 0.1005015  & 0.1763720  & 1.0574739  &-0.3501351  &-0.0462028  & 0.1140547\\
   0.5739993  &-0.0684158  & 0.3571626  & 0.0861604  &-0.6655387  &-0.1886137  &-0.4910517  & 0.1000489  & 0.1425230\\
  -0.0397509  & 0.0175350  &-0.4837186  & 0.1313369  & 0.0071916  & 0.1063667  &-0.5799628  & 0.0186656  &-0.1366172\\
   0.0288713  &-0.0270858  &-0.1711768  & 0.0516328  &-0.2256508  & 0.3277098  & 0.1917983  &-0.0453268  &-0.0388412\\
  -0.1128892  & 0.0088688  & 0.1873327  & 0.1850600  & 0.2445545  &-0.1280363  &-0.1585725  &-0.1034274  & 0.0225831\\
   0.0175299  &-0.0263491  & 0.0802817  &-0.1067258  &-0.0673210  & 0.2239093  & 0.0003892  &-0.0262654  & 0.1438365\\
  -0.1171573  & 0.0113433  & 0.0483356  &-0.1727556  &-0.1006414  &-0.0966047  &-0.0033401  &-0.1640771  & 0.2015432\\
  -0.1481463  & 0.0152680  & 0.0863339  & 0.1057026  &-0.0780024  &-0.0238758  & 0.0753908  & 0.1662874  &-0.0984320\\
  -0.0868955  & 0.0296646  &-0.0024721  &-0.0139792  &-0.0210549  & 0.0241345  &-0.0051360  & 0.0220213  &-0.0734852\\
   0.0428166  &-0.0263798  &-0.0151096  &-0.0341225  & 0.0015975  &-0.0045648  &-0.0459362  &-0.0584079  &-0.1230225\\
  -0.0239812  &-0.0019790  &-0.0630874  & 0.0477949  & 0.0312372  &-0.0118692  &-0.0260705  & 0.0443067  & 0.1156214\\
  -0.0084732  & 0.0434794  &-0.0322293  &-0.0031909  &-0.0252367  & 0.0031306  &-0.0173622  &-0.0663103  &-0.0011153\\
  -0.0013241  &-0.0085007  &-0.0350008  & 0.0115394  &-0.0048681  & 0.0243468  &-0.0006443  & 0.0436652  & 0.0255582\\
  -0.0261973  &-0.0090256  &-0.0019749  &-0.0604685  & 0.0112659  & 0.0022552  &-0.0232959  & 0.0276527  &-0.0063107\\
  -0.0469611  & 0.0047957  &-0.0132026  & 0.0243877  &-0.0253729  & 0.0101839  &-0.0006742  &-0.0451327  &-0.0022650\\
  -0.0020972  & 0.0071188  &-0.0000255  &-0.0008521  & 0.0000673  & 0.0000274  &-0.0002656  & 0.0000749  &-0.0000434\\
   0.0054816  & 0.0120243  & 0.0000808  &-0.0010943  & 0.0006406  & 0.0000603  & 0.0000083  & 0.0026580  & 0.0001099
\end{smallmatrix}\right]
  \end{align*}
}
  \caption{Positivstellensatz infeasibility certificate for the cyclic $9$-roots problem.}
  \label{fig:cycliccertificate}
\end{figure}

\end{exmp}

\subsection{Amoeba membership}
The (unlog) amoeba $\mathcal{A}_\V\subset \R_+^n$ of a variety $\V\subset \C^n$ is the image of $\V$ under the absolute value function, i.e.,
  $\mathcal{A}_\V = \{|z|: z\in \V \}$.
The amoeba membership problem is to determine whether some point $\lambda\in\R_+^n$ belongs to $\mathcal{A}_\V$.
Theobald and De Wolff recently proposed the use of Positivstellensatz certificates to prove that $\lambda\notin \mathcal{A}_\V$~\cite{Theobald2015}.
We now briefly describe this approach.

For some $f\in \C[z]$, let $\Re(f),\Im(f)\in \R[x,y]$ be such that 
\begin{align*}
f(x+i\,y)=\Re(f)(x,y)+i\,\Im(f)(x,y).
\end{align*}
Consider the following sets of equations in $\R[x,y]$:
\begin{align*}
  J_\V &:= \{ \Re(f_j),\Im(f_j)\}_{j=1}^{m}, &h_\lambda := \{ x_i^2+y_i^2-\lambda_i^2\}_{i=1}^n
\end{align*}
where $f_j$ are the defining equations of $\V$.
Theobald and De Wolff suggest computing a Gr\"obner basis of $J_\V\cup h_\lambda$ and then search for a Positivstellensatz infeasibility certificate.

Consider the following approach based on a set of samples $Z\subset \V$.
Let $\hat{\V}\in \C^{2n}$ be the zero set of $J_\V \subset \R[x,y]$.
Note that if $z\in \V$ then $(\Re(z),\Im(z))\in \hat{\V}$.
Thus, given some monomial vectors $u(x,y)$ and $v(x,y)$, we can formulate the following SDP:
\begin{equation*}
\begin{aligned}
  &\mbox{find} && Q\succeq 0,\;C \\
  &\mbox{subject to} &&-1 = Q\bullet u(x_s,y_s)u(x_s,y_s)^T + h_\lambda(x_s,y_s)^{T}C\,v(x_s,y_s),&\quad \mbox{ for }s=1,\ldots,S \\
  & &&z_s = x_s+i\,y_s \in \V
\end{aligned}
\end{equation*}

\begin{exmp}
  Let $\V\subset \C^{n k}$ be the complex variety associated to the Stiefel manifold $\mathit{St}(k,\R^n)$.
  Let $\lambda = (1/n,1/n,\ldots, 1/n)$, and let us show that $\lambda\notin \mathcal{A}_\V$ using the SDP from above. 
  We consider the degree~$1$ SOS relaxation for the case $n=6,k=4$.
  We require $1205$ complex samples on $\V$, which we obtain using the Cayley parametrization.
  It takes only $0.79s$ to compute the Positivstellensatz certificate from below. 
  On the other hand, Macaulay2 ran out of memory while computing a Gr\"obner basis of $J_\V$.
  {
  \begin{gather*}
    -1 = (R\,u(x,y))^T(R\,u(x,y)) - 1.2 \sum_{i=1}^6 h_i(x,y),\qquad \mbox{ for }(x,y)\in \hat{\V}\\
 u(x,y) = (y_6, y_5, y_4, y_3, y_2, y_1) \qquad h_i(x,y) = x_i^2+y_i^2 - 1/n^2\\
R=\left[\begin{smallmatrix}\\
   0.1765714  & 0.8458754  &-0.3371163  &-1.0598462  & 0.0269367  & 0.6447252\\
   0.2893688  & 0.1328983  &-1.4142041  & 0.4346374  & 0.1677938  &-0.2855976\\
  -0.4505154  &-0.6521358  &-0.3240160  & 0.2748310  &-0.0022626  & 1.2614402\\
   1.0819066  & 0.4199281  & 0.3317461  & 0.7231132  &-0.3725210  & 0.5304889\\
   0.8377745  &-1.0150421  &-0.1600336  &-0.6991182  &-0.3744590  &-0.1150085\\
   0.4579696  &-0.1868200  & 0.2138378  &-0.0250102  & 1.4464173  & 0.1299494
\end{smallmatrix}\right]
  \end{gather*}
}
\end{exmp}

\appendix
\section{Traditional SOS certificates}\label{s:sosvariety}
This section reviews two previously known methods to certify nonnegativity on a variety; 
we illustrate them with concrete, simple examples.
No new results are presented.

\subsection{Equations SOS}\label{s:classic}

Let $\V$ be a variety with defining equations $h=(h_1,\ldots,h_m)$, and let $p\in \R[x]$ be nonnegative on $\V\cap\R^n$.
The standard approach to certify this nonnegativity is to compute an SOS polynomial $F$ and a tuple of polynomials $g=(g_1,\ldots,g_m)$ that satisfy equation~\eqref{eq:sdprelaxation}.
Let $u(x)\in\R[x]^N$ consist of all monomials up to degree~$d$.
Computing an equations $d$-SOS certificate $(F,g)$ reduces to the following problem:
\begin{equation}\label{eq:SDPclassic}
\begin{aligned}
  &\mbox{find } && Q\in \SR^{N},\quad C\in\R^{m\times N},\quad Q\succeq 0 \\
  &\mbox{subject to} &&p(x) = Q\bullet u(x)u(x)^T + h(x)^TC\,u(x)
\end{aligned}
\end{equation}
where $F(x) = Q\bullet u(x)u(x)^T$ and $g(x) = C\,u(x)$.

\begin{exmp}\label{exmp:SO2}
  Let us retake the case from Example~\ref{exmp:SO2_samples}.
  We want to certify that $p(X) = 4X_{21}-2X_{11}X_{22}-2X_{12}X_{21}+3$ is nonnegative on the variety in~\eqref{eq:SO2}.
  We fix a degree bound of $d=1$ and let $u(X)=(1,X_{11},X_{12},X_{21},X_{22})$.
  The dimensions of the matrices in the SDP~\eqref{eq:SDPclassic} are $Q\in \SR^{5}$, $C\in \R^{4\times 5}$.
  Note that these dimensions are larger than in Example~\ref{exmp:SO2_samples}.
  Solving the SDP leads to the following SOS certificate:
  \begin{gather*}
    p(X) = F(X) - g_1h_1 - g_2h_2 + g_3h_3 - g_4h_4,\\
    F(X) = (X_{21} - X_{12} + 1)^2 + (X_{11} - X_{22})^2, \\
    g_1 = X_{12}+1, \quad
    g_2 = X_{21}+1, \quad
    g_3 = X_{11}+X_{22}, \quad
    g_4 = X_{21}+X_{12} \\
    h_1 = X_{11}^2+X_{21}^2-1, \quad
    h_2 = X_{12}^2+X_{22}^2-1, \quad
    h_3 = X_{11}X_{12}+X_{21}X_{22}, \quad
    h_4 = \det(X)-1.
  \end{gather*}
\end{exmp}

\subsection{Quotient ring SOS}\label{s:quotientring}

It is possible to take advantage of the quotient ring structure to obtain a simpler SDP.
The standard approach to do so requires a Gr\"obner basis of the ideal $I:=\langle h\rangle$.
We briefly explain the procedure now.
We refer to~\cite{clo} for an introduction to Gr\"obner bases.

Consider the quotient ring $\mathcal{R}'= \R[x]/I$, and let $\phi: \R[x]\to \mathcal{R}'$ be the canonical homomorphism.
Since $\phi(g_jh_j)=0$, when we view the SDP in~\eqref{eq:SDPclassic} in the quotient ring we obtain:
\begin{equation}\label{eq:SDPgrobner}
\begin{aligned}
  &\mbox{find } &&Q\in \R^{SN},\quad Q\succeq 0 \\
  &\mbox{subject to} &&\phi(p(x)) = Q\bullet\phi(u(x)u(x)^T)
\end{aligned}
\end{equation}
Note that the matrix $C$ was eliminated.
The above SDP requires methods to represent and compute the quotient ring $\mathcal{R}'$.
Given a Gr\"obner basis $\mathit{gb}$ of $I$, there is a simple way to achieve this.
Concretely, any polynomial $f\in \R[x]$ can be written in \emph{normal form}, denoted as $\phi_{\mathit{gb}}(f)$, with respect to $\mathit{gb}$.
This normal form map $\phi_{\mathit{gb}}$ is effectively representing the quotient ring.
In addition, there is a natural monomial basis $u(x)$ to use, given by the standard monomials with respect to $\mathit{gb}$.

\begin{exmp}\label{exmp:SO2_Grobner}
  Consider again the case from Example~\ref{exmp:SO2_samples}.
  Consider the Gr\"obner basis
  \begin{align*}
    \mathit{gb} = \{\underline{X_{11}}-X_{22},\;
    \underline{X_{12}}+X_{21}, \;
    \underline{X_{21}^2}+X_{22}^2-1\},
  \end{align*}
  where the leading monomials are underlined.
  Let $u(X)=(1,X_{21},X_{22})$ be the standard monomials of degree at most~$d=1$. 
  Computing the normal form $\phi_{\mathit{gb}}(u(X)u(X)^T)$, we obtain the SDP:
\begin{eqnarray*}
  \mbox{find }\quad & Q\in \SR^{3},\quad Q\succeq 0 \\
  \mbox{subject to }\quad & 4X_{21}-4X_{22}^2+5 = Q \bullet 
    \begin{bmatrix} 1 &X_{21} &X_{22} \\ X_{21} &1-X_{22}^2  &X_{21}X_{22} \\ X_{22} &X_{21}X_{22}  &X_{22}^2 \end{bmatrix}\\
\end{eqnarray*}
  Solving the SDP leads to the SOS certificate $\phi_{\mathit{gb}}(p(X))= \phi_{\mathit{gb}}((2X_{21}+1)^2)$.
  Note that the certificate obtained, as well as the dimension of $Q$, agrees with that of Example~\ref{exmp:SO2_samples}.
\end{exmp}

\begin{rem}[Sampling SOS]
  Our sampling approach can be seen as a quotient ring formulation.
  The difference is that we use the radical ideal $J = \sqrt{I}$ and the coordinate ring $\mathcal{R}=\R[x]/J$.
  Given a sample set $Z=\{z_1,\ldots,z_S\}$ our description of $\mathcal{R}$ is given by the \emph{evaluation map} 
    $\phi_Z(f) := (f(z_1),\ldots,f(z_S))$.
\end{rem}

\bibliography{refernces}

\begin{thebibliography}{10}

\bibitem{bertini}
Daniel~J Bates, Jonathan~D Hauenstein, Andrew~J Sommese, and Charles~W Wampler.
\newblock Bertini: Software for numerical algebraic geometry.
\newblock Available at \url{www.nd.edu/~sommese/bertini}, 2006.

\bibitem{Bates2013}
Daniel~J Bates, Jonathan~D Hauenstein, Andrew~J Sommese, and Charles~W Wampler.
\newblock {\em Numerically solving polynomial systems with Bertini}, volume~25
  of {\em Software, Environments, and Tools}.
\newblock SIAM, 2013.

\bibitem{dsdp}
Steven~J Benson and Yinyu Ye.
\newblock {DSDP5}: Software for semidefinite programming.
\newblock {\em Mathematics and Computer Science Division, Argonne National
  Laboratory, Argonne, IL, Tech. Rep. ANL/MCS-P1289-0905}, 2005.

\bibitem{Blekherman2013}
Grigoriy Blekherman, Pablo~A. Parrilo, and Rekha~R. Thomas.
\newblock {\em Semidefinite optimization and convex algebraic geometry},
  volume~13 of {\em Series on Optimization}.
\newblock MOS-SIAM, 2013.

\bibitem{Blekherman2016}
Grigoriy Blekherman, Gregory Smith, and Mauricio Velasco.
\newblock Sums of squares and varieties of minimal degree.
\newblock {\em Journal of the American Mathematical Society}, 29(3):893--913,
  2016.

\bibitem{Chardin1989}
Marc Chardin.
\newblock Une majoration de la fonction de {H}ilbert et ses cons{\'e}quences
  pour l'interpolation alg{\'e}brique.
\newblock {\em Bulletin de la Soci{\'e}t{\'e} Math{\'e}matique de France},
  117(3):305--318, 1989.

\bibitem{Chu2001}
Moody~T Chu and Nickolay~T Trendafilov.
\newblock The orthogonally constrained regression revisited.
\newblock {\em Journal of Computational and Graphical Statistics},
  10(4):746--771, 2001.

\bibitem{Ciliberto:2001aa}
Ciro Ciliberto.
\newblock Geometric aspects of polynomial interpolation in more variables and
  of {W}aring's problem.
\newblock In {\em European Congress of Mathematics}, volume 201 of {\em
  Progress in Mathematics}, pages 289--316. Springer, 2001.

\bibitem{clo}
David~A Cox, John Little, and Donal O'Shea.
\newblock {\em Ideals, varieties, and algorithms: an introduction to
  computational algebraic geometry and commutative algebra}.
\newblock Springer, 2007.

\bibitem{De-Lathauwer:2000aa}
Lieven De~Lathauwer, Bart De~Moor, and Joos Vandewalle.
\newblock On the best rank-1 and rank-$(r_1, r_2, \ldots, r_n)$ approximation
  of higher-order tensors.
\newblock {\em SIAM Journal on Matrix Analysis and Applications},
  21(4):1324--1342, 2000.

\bibitem{Faugere2001}
Jean-Charles Faug{\`e}re.
\newblock Finding all the solutions of {C}yclic 9 using {G}r{\"o}bner basis
  techniques.
\newblock In {\em Computer Mathematics - Proceedings of the Fifth Asian
  Symposium (ASCM 2001)}, volume~9, pages 1--12. World Scientific, 2001.

\bibitem{macaulay2}
Daniel~R. Grayson and Michael~E. Stillman.
\newblock Macaulay2, a software system for research in algebraic geometry.
\newblock Available at \url{http://www.math.uiuc.edu/Macaulay2/}.

\bibitem{Jantzen2004}
Jens~Carsten Jantzen.
\newblock Nilpotent orbits in representation theory.
\newblock In {\em Lie theory}, pages 1--211. Springer, 2004.

\bibitem{Kabanets2004}
Valentine Kabanets and Russell Impagliazzo.
\newblock Derandomizing polynomial identity tests means proving circuit lower
  bounds.
\newblock {\em Computational Complexity}, 13(1-2):1--46, 2004.

\bibitem{Landsberg:2007aa}
Joseph~M Landsberg and Jerzy Weyman.
\newblock On the ideals and singularities of secant varieties of {S}egre
  varieties.
\newblock {\em Bulletin of the London Mathematical Society}, 2007.

\bibitem{Lasserre2001}
Jean~B Lasserre.
\newblock Global optimization with polynomials and the problem of moments.
\newblock {\em SIAM Journal on Optimization}, 11(3):796--817, 2001.

\bibitem{Lasserre2009}
Jean~B Lasserre.
\newblock {\em Moments, positive polynomials and their applications}, volume~1
  of {\em Series on Optimization and Its Applications}.
\newblock World Scientific, 2009.

\bibitem{laurent2009sums}
Monique Laurent.
\newblock Sums of squares, moment matrices and optimization over polynomials.
\newblock In {\em Emerging applications of algebraic geometry}, pages 157--270.
  Springer, 2009.

\bibitem{Liu2007}
Zhang Liu and Lieven Vandenberghe.
\newblock Low-rank structure in semidefinite programs derived from the {KYP}
  lemma.
\newblock In {\em 46th IEEE Conference on Decision and Control}, pages
  5652--5659, 2007.

\bibitem{Lofberg2004}
Johan L{\"o}fberg and Pablo~A Parrilo.
\newblock From coefficients to samples: a new approach to {SOS} optimization.
\newblock In {\em 43rd IEEE Conference on Decision and Control}, volume~3,
  pages 3154--3159, 2004.

\bibitem{Nie2006}
Jiawang Nie, James Demmel, and Bernd Sturmfels.
\newblock Minimizing polynomials via sum of squares over the gradient ideal.
\newblock {\em Mathematical programming}, 106(3):587--606, 2006.

\bibitem{Parrilo2003b}
Pablo~A Parrilo.
\newblock Exploiting structure in sum of squares programs.
\newblock In {\em 42nd IEEE Conference on Decision and Control}, volume~5,
  pages 4664--4669, 2003.

\bibitem{Parrilo2003}
Pablo~A Parrilo.
\newblock Semidefinite programming relaxations for semialgebraic problems.
\newblock {\em Mathematical programming}, 96(2):293--320, 2003.

\bibitem{Permenter:2012aa}
Frank Permenter and Pablo~A Parrilo.
\newblock Selecting a monomial basis for sums of squares programming over a
  quotient ring.
\newblock In {\em IEEE 51st Annual Conference on Decision and Control}, pages
  1871--1876, 2012.

\bibitem{Roh2007}
Tae Roh, Bogdan Dumitrescu, and Lieven Vandenberghe.
\newblock Multidimensional {FIR} filter design via trigonometric sum-of-squares
  optimization.
\newblock {\em IEEE Journal of Selected Topics in Signal Processing},
  1(4):641--650, 2007.

\bibitem{Roh2006}
Tae Roh and Lieven Vandenberghe.
\newblock Discrete transforms, semidefinite programming, and sum-of-squares
  representations of nonnegative polynomials.
\newblock {\em SIAM Journal on Optimization}, 16(4):939--964, 2006.

\bibitem{Sauer2006}
Tomas Sauer.
\newblock Polynomial interpolation in several variables: lattices, differences,
  and ideals.
\newblock {\em Studies in Computational Mathematics}, 12:191--230, 2006.

\bibitem{Saxena2009}
Nitin Saxena.
\newblock Progress on polynomial identity testing.
\newblock {\em Bulletin of the EATCS}, 99:49--79, 2009.

\bibitem{Scheiderer2009}
Claus Scheiderer.
\newblock Positivity and sums of squares: a guide to recent results.
\newblock In {\em Emerging applications of algebraic geometry}, pages 271--324.
  Springer, 2009.

\bibitem{Schmudgen1991}
Konrad Schm{\"u}dgen.
\newblock The $k$-moment problem for compact semi-algebraic sets.
\newblock {\em Mathematische Annalen}, 289(1):203--206, 1991.

\bibitem{Schweighofer2006}
Markus Schweighofer.
\newblock Global optimization of polynomials using gradient tentacles and sums
  of squares.
\newblock {\em SIAM Journal on Optimization}, 17(3):920--942, 2006.

\bibitem{Sommese2005}
Andrew~John Sommese and Charles~Weldon Wampler.
\newblock {\em The Numerical solution of systems of polynomials arising in
  engineering and science}, volume~99.
\newblock World Scientific, 2005.

\bibitem{Theobald2015}
Thorsten Theobald and Timo De~Wolff.
\newblock Approximating amoebas and coamoebas by sums of squares.
\newblock {\em Mathematics of Computation}, 84(291):455--473, 2015.

\bibitem{sdpt3}
Reha~H T\"ut\"unc\"u, Kim~C Toh, and Michael~J Todd.
\newblock Solving semidefinite-quadratic-linear programs using {SDPT3}.
\newblock {\em Mathematical programming}, 95(2):189--217, 2003.

\bibitem{phcpack}
Jan Verschelde.
\newblock Algorithm 795: {PHC}pack: {A} general-purpose solver for polynomial
  systems by homotopy continuation.
\newblock {\em ACM Transactions on Mathematical Software (TOMS)},
  25(2):251--276, 1999.

\bibitem{Viklands:2006aa}
Thomas Viklands.
\newblock {\em Algorithms for the weighted orthogonal {P}rocrustes problem and
  other least squares problems}.
\newblock PhD thesis, Umea University, Sweden, 2006.

\bibitem{Wang:2007aa}
Huan Wang, Shuicheng Yan, Dong Xu, Xiaoou Tang, and Thomas Huang.
\newblock Trace ratio vs.\ ratio trace for dimensionality reduction.
\newblock In {\em IEEE Conference on Computer Vision and Pattern Recognition},
  pages 1--8, 2007.

\bibitem{Zhang2014}
Lei~Hong Zhang and Ren~Cang Li.
\newblock Maximization of the sum of the trace ratio on the {S}tiefel manifold,
  {I}: {T}heory.
\newblock {\em Science China Mathematics}, 57:2495--2508, 2014.

\bibitem{Zhang2014a}
Lei~Hong Zhang and Ren~Cang Li.
\newblock Maximization of the sum of the trace ratio on the {S}tiefel manifold,
  {II}: {C}omputation.
\newblock {\em Science China Mathematics}, 57:1--18, 2014.

\end{thebibliography}
\bibliographystyle{plain}

\end{document}